\newcommand{\R}{\mathbb{R}}
\newcommand{\Z}{\mathbb{Z}}
\newtheorem{thm}{Theorem}[section]
\newtheorem{lemma}[thm]{Lemma}
\newtheorem{cor}[thm]{Corollary}
\keywords{Poincar\'{e} Domain, $L^s$-Averaging Domain, Quasihyperbolic Distance, Whitney Subdivision}
\title{Further Study on Domains and Quasihyperbolic Distances}
\author{Shusen Ding, Dylan Helliwell, Gavin Pandya, Arya Yae}
\date{\today}
\thanks{Gavin Pandya and Arya Yae were supported by Seattle University's College of Science and Engineering Summer Undergraduate Research program in 2018.}
\begin{document}

\maketitle

\begin{abstract}
We establish constructive geometric tools for determining when a domain is $L^s$-averaging and obtain upper and lower bounds for the $L^s$-integrals of the quasihyperbolic distance. We also construct examples which are helpful to understand our geometric tools and the relationship between $p$-Poincar\'{e} domains and $L^s$-averaging domains.  Finally, finite unions of $L^s(\mu)$-averaging domains are explored.
\end{abstract}

\section{Introduction}

Domains and mappings are fundamental objects which have been well studied and applied in many fields of mathematics and engineering, including partial differential equations, potential analysis and harmonic analysis.
It is well known that domains affect the properties of objects defined on them such as functions, mappings, differential forms, integrals, and differential equations.  There is are a number of analytic criteria that can be used to classify various domains in $\mathbb{R}^n$, such as uniform domains, John domains and $L^s$-averaging domains and a typical goal is to determine the relationships among these criteria.  The quasihyperbolic distance provides a powerful tool which has been widely used in geometric analysis in recent years, for example, to characterize $L^s$-averaging domains and $L^s(\mu)$-averaging domains.  In this paper, we provide constructive geometric tools for determining when this characterization is met.

This paper is organized as follows.  After introducing notation and background information in Section \ref{backgroundsec}, we then define essential tubes and provide basic examples in Section \ref{tubesec}.  In Section \ref{Whitneysec}, the notion of a generalized Whitney subdivision is introduced and basic properties are established.  In Sections \ref{cuspsec} and \ref{blockdomainsec}, essential tubes and the idea of generalized Whitney subdivision are used to prove necessary and sufficient conditions for cusps and domains built using particular families of cubical blocks to be $L^s$-averaging.  Finally, in Section \ref{unionsec}, finite unions of $L^s(\mu)$-averaging domains are explored.

\section{Background} \label{backgroundsec}
In this section, after introducing some notation, we review the analytical criteria of interest in this paper, along with some of the known relationships among these criteria.

Generally, we will use $z$ to denote a point in a domain and we will reserve $x$ and $y$ for coordinates.

Throughout, we consider bounded and connected domains $\Omega$ of $\mathbb{R}^n$ with $n\geq2$.  For any set $E$ in $\mathbb{R}^n$, we denote by $|E|$ the Lebesgue measure of $E$, and for the purposes of integration, we will use $dz$ to denote the Lebesgue measure.  We will also consider more general measures $\mu$ defined in terms of a weight function $w$ so that $d\mu = w(z)dz$.  In these instances, the measure of a set $E$ will be denoted $\mu(E)$.  For a function $u \in L^1(\Omega)$, we denote the mean by ${u_{\Omega}}$.

We use a capital $C$ to indicate a positive constant, with optional arguments, such as the dimension $n$, to indicate on what a constant may depend.  This constant may be different in different instances.  Subscripts may be used when distinctions are necessary.

The following definition of $L^s$-averaging domains was introduced by Susan G. Staples in \cite{Staples}. For $1 \leq s <\infty$,  a domain $\Omega$ is called an \textit{$L^s$-averaging domain} if for all $u\in L^1_\text{loc}(\Omega, dz)$ it follows that
\begin{equation*}
	\left(\frac{1}{|\Omega|}\int_\Omega |u-{u_{\Omega}}|^s\,dz\right)^{1/s}
	\leq
	C(s, \Omega)\left(\sup_{ B\subset\Omega}\frac{1}{|B|}\int_B |u-{u_B}|^s\,dz\right)^{1/s},
\end{equation*}
where $B$  is any open ball in $\Omega$.

Many results about differential forms and related operators were established in $L^s$-averaging domains, see for example \cite{AgDiNo, DingLiu, LiuDing}.  In \cite{DingNolder}, $L^s$-averaging domains were extended to weighted averaging domains, $L^s(\mu)$-averaging domains, and a characterization in terms of Whitney cubes was provided.  Generalizing further, in \cite{Ding}, $L^\varphi(\mu)$-averaging domains were considered, where $\varphi$ is a convex function defined on $(0,\infty)$.

The following definition of the quasihyperbolic distance can be found in \cite{GehringOsgood}. For any points $z$ and $z_0$ in $\Omega$, the quasihyperbolic distance between $z$ and $z_0$ is given by
\begin{equation*}
k(z,z_0;\Omega)=
	\inf_{\gamma \subset \Omega}
	    \int_\gamma \frac{1}{d(\zeta,\partial \Omega)} d\sigma
	= \inf_{\gamma \subset \Omega}
	    \int_I \frac{|\gamma'(t)|}{d(\gamma(t),\partial\Omega)}\,dt
\end{equation*}
where $\gamma:I \rightarrow \Omega$ is a rectifiable curve connecting $z$ to $z_0$ and the infimum is being taken over all such curves.  F. W. Gehring and B. Osgood \cite{GehringOsgood} proved that for any two points in $\Omega$ there is a quasihyperbolic geodesic arc joining them.  In \cite{Staples}, Staples showed that $\Omega$ is an $L^s$-averaging domain if and only if
\begin{equation*}
	\left(\frac{1}{|\Omega|}\int_\Omega k(z,z_0;\Omega)^s\,dz \right)^{1/s}\leq C,
\end{equation*}
where $z_0$ is any fixed point in $\Omega$ and $C$ is a constant depending only on $n$, $s$, $|\Omega|$, the choice of $z_0 \in \Omega$, and the constant from the inequality in the definition of $L^s$-averaging domains.  Using this characterization, it was also shown in \cite{Staples} that John domains are $L^s$-averaging for all $1 \leq s < \infty$.

For $1 \leq p < \infty$ we say a domain $\Omega$ a \textit{$p$-Poincar\'e Domain} if for every function $u$ in the Sobolev space $W^{1,p}(\Omega)$,
\begin{equation*}
	\|u-{u_{\Omega}}\|_{L^p(\Omega)} \leq C(p, \Omega) \|\nabla u\|_{L^p(\Omega)}.
\end{equation*}
In \cite{Staples} it was shown that for $p \geq n$, if $\Omega$ is $L^p$-averaging, then it is $p$-Poincar\'{e} as well.  Staples also showed, by an explicit example, that this relationship need not hold if $p < n$.  Specifically, she constructed a ``rooms-and-halls'' domain that was $L^s$-averaging for all $s \geq 1$, but was not $p$-Poincar\'{e} for any $p < n$.  Of course, by the previously mentioned result, this domain is necessarily $p$-Poincar\'{e} for $p \geq n$.

In somewhat of a contrast, in \cite{SmithSteg} it was shown that star-shaped domains are $p$-Poincar\'{e} for all $1 \leq p < \infty$, but as shown in \cite{Staples} and in Section \ref{cuspsec} and \ref{blockdomainsec} below, there are star-shaped domains that are $L^s$-averaging if and only if $1 \leq s < k$ where $k$ depends on the dimension and parameters defining the domain.

\section{Essential tubes} \label{tubesec}

In this section, we define essential tubes, use them to generate necessary conditions for domains to be $L^s$-averaging, and provide some examples of domains that are not $L^s$-averaging for any $1 \leq s < \infty$.
\subsection{Essential tubes defined}

Let $D_r^k \subset \mathbb{R}^k$ be the closed $k$-dimensional disk of radius $r$ centered at the origin.  Consider the cylinder $[0, l] \times D_r^{n-1}$ in $\mathbb{R}^n$.  We define a tube $T_{l,r}$ to be the image under a Euclidean transformation of this cylinder.  We say the images of $\{0\} \times D_r^{n-1}$ and $\{l\} \times D_r^{n-1}$ are the ends of the tube and we say the image of $[0,l] \times (\partial D_r^{n-1})$ is the wall of the tube.

Let $\Omega$ be a domain in $\mathbb{R}^n$.  We define an essential tube $T = T_{l, r, c}$ for $\Omega$ to be a tube $T_{l, r}$ such that $T \cap \Omega$ has a connected component $\Omega_T$ satisfying the following properties:
\begin{itemize}
    \item the intersection of $\Omega_T$ with the wall of $T$ is empty,
    \item There exists $c > 0$ such that for all $t \in [0, l]$, the $n-1$ dimensional measure of the $t^{\mathrm{th}}$ slice of $\Omega_T$ is at least $c$ times the measure of $D_r^{n-1}$.
\end{itemize}
See Figure \ref{essentialtubefig}.
\begin{figure}
\begin{picture}(285,200)
\put(0,0){
\includegraphics[scale = 1, clip = true, draft = false]{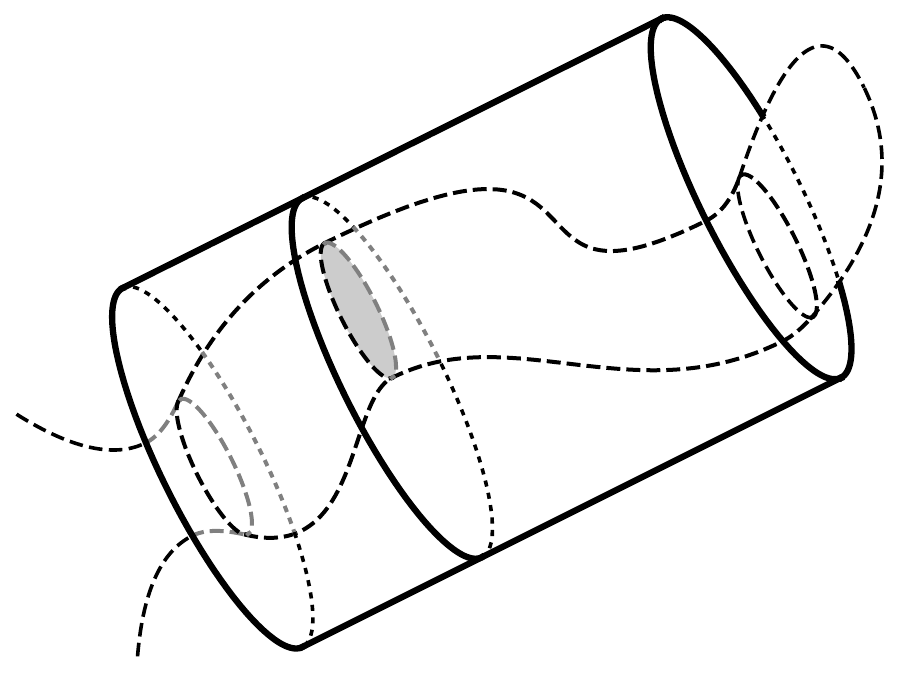}
}
\put(65, 155){$t^{\mathrm{th}}$ slice}
\put(85, 150){\vector(1,-2){5}}
\put(94,-3){\line(2,1){74}}
\put(180,39){\line(2,1){74}}
\put(91,3){\line(1,-2){6}}
\put(251,83){\line(1,-2){6}}
\put(173,34){$l$}
\put(238,145){\line(2,1){12}}
\put(264,93){\line(2,1){12}}
\put(244,148){\line(1,-2){14}}
\put(270,96){\line(-1,2){7}}
\put(258,112){$r$}
\end{picture}
\caption{An essential tube $T = T_{l,r,c}$ enclosing a portion $\Omega_T$ of a domain $\Omega$.  The shaded region is the $t^{\mathrm{th}}$ slice of $\Omega_T$ and its $n-1$ dimensional measure is at least $c$ times the measure of the corresponding slice of $T$.} \label{essentialtubefig}
\end{figure}

\subsection{Quasihyperbolic distance calculations}

First, we obtain the lower bound of the $L^s$-integral of the quasihyperbolic distance provided by $\Omega_T$ in the following theorem.

\begin{thm}
Let $\Omega$ be a domain, let $T = T_{l,r,c}$ be an essential tube for $\Omega$ with corresponding component $\Omega_T$, and let $z_0 \in \Omega$ be any point not in $\Omega_T$.  Then
\begin{equation*}
        \int_{\Omega_T} k(z, z_0; \Omega_T)^s\, dz
        \geq C(s,n) c r^{n}                 \left(\frac{l}{r}\right)^{s+1}.
\end{equation*}
\end{thm}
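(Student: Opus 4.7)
The plan is to establish a pointwise lower bound for $k(z, z_0; \Omega_T)$ in terms of the axial position of $z$ in the tube, then integrate using Fubini together with the essential-tube slice condition.

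First, I would parametrize the tube so that every point $z \in T$ has an axial coordinate $t(z) \in [0,l]$, inherited from the Euclidean transformation defining $T_{l,r}$. For any $\zeta \in \Omega_T$, since the wall of $T$ is disjoint from $\Omega_T$ and every point of $T$ lies within Euclidean distance $r$ of the wall, the straight segment from $\zeta$ to its nearest wall point must cross $\partial \Omega_T$, giving $d(\zeta, \partial \Omega_T) \leq r$; hence $1/d(\zeta, \partial \Omega_T) \geq 1/r$ throughout $\Omega_T$. Next, any curve in $\Omega_T$ contributing to the infimum defining $k(z, z_0; \Omega_T)$ must leave $\Omega_T$ through one of the two end disks, since $z_0 \notin \Omega_T$ and the wall is disjoint from $\Omega_T$. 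Projecting such a curve onto the tube axis shows its length is at least $\min(t(z), l - t(z))$, the axial distance from $z$ to the nearer end. Combining these two observations gives the pointwise bound
\[
k(z, z_0; \Omega_T) \geq \frac{\min(t(z), l - t(z))}{r}.
\]

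I would then apply Fubini along the tube axis. Writing $S_t$ for the slice of $\Omega_T$ at axial coordinate $t$, the essential-tube hypothesis gives $|S_t| \geq c |D_r^{n-1}|$, and $|D_r^{n-1}| = C(n) r^{n-1}$ is purely dimensional, so
\[
\int_{\Omega_T} k(z, z_0; \Omega_T)^s\, dz \geq \frac{c\, C(n)\, r^{n-1}}{r^s} \int_0^l (\min(t, l - t))^s\, dt.
\]
The axial integral evaluates to $l^{s+1}/(2^s(s+1))$, and regrouping the powers as $r^{n-s-1} l^{s+1} = r^{n} (l/r)^{s+1}$ yields the claimed bound.

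The main obstacle I anticipate is the length lower bound in the first paragraph: one must rigorously justify that any curve contributing to the infimum leaves $\Omega_T$ through an end disk and has length at least the axial distance to that end. This uses the connectedness of $\Omega_T$ together with the essential-tube condition that $\Omega_T$ is disjoint from the wall, which jointly rule out any sideways shortcut. A secondary subtlety is the meaning of $k(z, z_0; \Omega_T)$ when $z_0 \notin \Omega_T$, which I would interpret as a limit along $\Omega_T$-sequences approaching $z_0$, or equivalently as a lower bound coming from curves in $\Omega_T$ exiting through $\partial \Omega_T$ in the direction of $z_0$.
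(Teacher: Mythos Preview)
Your proposal is correct and follows essentially the same route as the paper: align coordinates with the tube, use $d(\zeta,\partial\Omega_T)\le r$ together with axial projection to get the pointwise bound $k\ge \min(t,l-t)/r$, then integrate slice-by-slice using the area lower bound $c\,|D_r^{n-1}|$. The only cosmetic difference is that the paper restricts to the half $\{t\le l/2\}$ and integrates $t^s$ there, while you keep the symmetric integrand on all of $[0,l]$; the resulting constants agree. Your remark about the awkwardness of writing $k(z,z_0;\Omega_T)$ with $z_0\notin\Omega_T$ is well taken---the paper's own proof silently switches to $k(z,z_0;\Omega)$ in the displayed computation, and the argument (yours and theirs) really bounds that quantity.
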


\begin{proof}
First, choose coordinates so that the wall of the tube aligns with the first coordinate and, writing $z = (x_1, x_2, \ldots, x_n)$, one end corresponds to $x_1 = 0$, and the other end corresponds to $x_1 = l$.  For any point $z \in \Omega_T$, let $\gamma:[a,b] \rightarrow \Omega$ be a rectifiable curve connecting $z$ to $z_0$.  Let $\gamma$ leave $z$ for the last time at time $\alpha$, and let $\gamma$ leave $\Omega_T$ for the first time at time $\beta$.  Then for all $t \in [\alpha, \beta]$, $\gamma(t) \in \Omega_T$, $d(\gamma(t), \partial \Omega_T) \leq r$,
and $|\gamma'(t)|\geq |\gamma_1'(t)|$.  Using these estimates, and accounting for the fact that the curve may leave either end of the tube,
\begin{align*}
    \int_{\gamma} \frac{1}{d(\zeta, \partial \Omega_T)}\, d\sigma
            &= \int_a^b \frac{|\gamma'(t)|}{d(\gamma(t), \partial \Omega_T)}\, dt \\
            &\geq \int_{\alpha}^{\beta} \frac{|\gamma_1'(t)|}{r}\, dt \\
            &\geq \min\left\{\frac{x_1}{r}, \frac{l-x_1}{r}\right\}.
\end{align*}
This is true for all rectifiable curves so for $z \in \Omega_T'= \Omega_T \cap \left\{z:x_1 \leq \frac{l}{2}\right\}$, 
\[
k(z, z_0; \Omega) \geq \frac{x_1}{r}.
\]
Hence, letting $V_k$ be the volume of the unit disk in $\R^k$,
\begin{align*}
    \int_{\Omega_T} k(z, z_0; \Omega)^s\, dz
        &\geq \int_{\Omega_T'}
        k(z, z_0; \Omega)^s\, dz \\
        &\geq \int_{\Omega_T'} \left(\frac{x_1}{r}\right)^s \, dz \\
        &\geq \frac{1}{r^s}c\, (V_{n-1}r^{n-1}) \int_0^{\frac{l}{2}} x_1^s \, dx_1 \\
        &= \frac{V_{n-1}}{(s+1)2^{s+1}}c \, r^{n}                 \left(\frac{l}{r}\right)^{s+1}. 
\end{align*}
\end{proof}
With this result in hand, essential tubes can be used to show when a given domain fails to be $L^s$-averaging.  To help with this, we introduce the following notation:  Given a family $\mathcal{T}$ of essential tubes $T$ with parameters $r_T$, $l_T$, and $c_T$, define $E_{\mathcal{T}}$ to be the following sum:
\[
E_{\mathcal{T}} = \sum_{T \in \mathcal{T}} c_T (r_T)^n \left(\frac{l_T}{r_T}\right)^{s+1}.
\]
We now have the following
\begin{cor} \label{tubeestcor}
Let $\mathcal{T}$ be a family of essential tubes for $\Omega$ such that the corresponding components $\Omega_T$ are pairwise disjoint.  Let the parameters of $T \in \mathcal{T}$ be $r_T$, $l_T$, and $c_T$.  Then if $E_{\mathcal{T}}$ is infinite, then $\Omega$ cannot be $L^s$-averaging. \end{cor}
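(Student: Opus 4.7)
The plan is to use Staples' quasihyperbolic characterization of $L^s$-averaging domains in the contrapositive form: if $\int_\Omega k(z,z_0;\Omega)^s\,dz = \infty$ for some (equivalently, every) fixed basepoint $z_0\in\Omega$, then $\Omega$ is not $L^s$-averaging. So the entire task reduces to showing that the hypothesis $E_{\mathcal{T}}=\infty$ forces this integral to diverge.

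First I would fix a basepoint $z_0\in\Omega$. Because the components $\Omega_T$ are pairwise disjoint, $z_0$ lies in at most one of them; by discarding that single tube from $\mathcal{T}$ if necessary, I may assume $z_0\notin\Omega_T$ for every $T\in\mathcal{T}$, and since removing one term from a divergent series of nonnegative terms leaves it divergent, $E_{\mathcal{T}}$ is still infinite. Inspection of the preceding theorem's proof shows that the bound $k(z,z_0;\Omega)\geq x_1/r$ is obtained using the ambient quasihyperbolic distance (the distance to the wall of $T$ is at most $r$, and $\Omega_T$ meets $\partial T$ only in the ends, so $d(\gamma(t),\partial\Omega)\leq r$ for $\gamma(t)\in\Omega_T$). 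Thus the conclusion of the theorem applies equally well to $k(z,z_0;\Omega)$, giving
\begin{equation*}
\int_{\Omega_T} k(z,z_0;\Omega)^s\,dz \;\geq\; C(s,n)\,c_T\,r_T^{\,n}\!\left(\frac{l_T}{r_T}\right)^{s+1}
\end{equation*}
for every $T\in\mathcal{T}$.

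Next, because the $\Omega_T$ are pairwise disjoint subsets of $\Omega$, monotonicity of the integral over disjoint unions yields
\begin{equation*}
\int_\Omega k(z,z_0;\Omega)^s\,dz \;\geq\; \sum_{T\in\mathcal{T}} \int_{\Omega_T} k(z,z_0;\Omega)^s\,dz \;\geq\; C(s,n)\,E_{\mathcal{T}} = \infty.
\end{equation*}
By Staples' characterization this contradicts $L^s$-averaging, which completes the proof.

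There is no real obstacle here: the work is in the preceding theorem. The only subtlety worth flagging is the one addressed above, namely that $z_0$ might happen to lie in one of the $\Omega_T$ (harmless by disjointness) and that the lower bound from the theorem must be interpreted as a bound on $k(z,z_0;\Omega)$ rather than on a distance intrinsic to $\Omega_T$, which is exactly what the theorem's proof actually delivers.
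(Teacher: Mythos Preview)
Your proof is correct and is exactly the argument the paper has in mind; the paper explicitly leaves this proof to the reader, and what you wrote is the natural two-line deduction from the preceding theorem together with Staples' characterization. Your observation that the lower bound in the theorem is really a bound on $k(z,z_0;\Omega)$ (as the proof of that theorem in fact shows, despite the $\Omega_T$ appearing in its displayed statement) is the one point that needed to be made precise, and you handled it correctly.
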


The proof of this result is left to the reader.  We demonstrate how this can be used in the examples below and in later sections.

\subsection{Examples}
For the first example, we construct a ``rooms-and-halls'' domain $\Omega \subset \R^2$ that is not $p$-Poincar\'{e} for any $1 \leq p < \infty$.  First, define two sequences $x_j=1-1/2^j$ and $x'_j=x_j+1/2^{j+2}$ for $j\in\Z^+$, and set $x'_0=0$.  Next, define a sequence of ``rooms'' by
\begin{equation*}
	R_j=[x'_j, x_{j+1}]\times[0,1]\qquad \text{ for } j=0,1,2,\dots
\end{equation*}
and ``halls'' by
\begin{equation*}
	H_j=[x_j,x'_j]\times \left[0,\frac{1}{(j+1)!}\right]\qquad \text{ for } j=1,2,\dots.
\end{equation*}
Letting $f(x,y)=(-x,y)$, set $A=R_0\cup\left[\bigcup_{j\in\Z^+}(R_j\cup H_j)\right]$, and define $\Omega=\text{int}(A\cup f(A))$.  See Figure \ref{roomsnhallsfig}.

\begin{figure}
\begin{picture}(265,140)
\put(0,0){
\includegraphics[scale = 1, clip = true, draft = false]{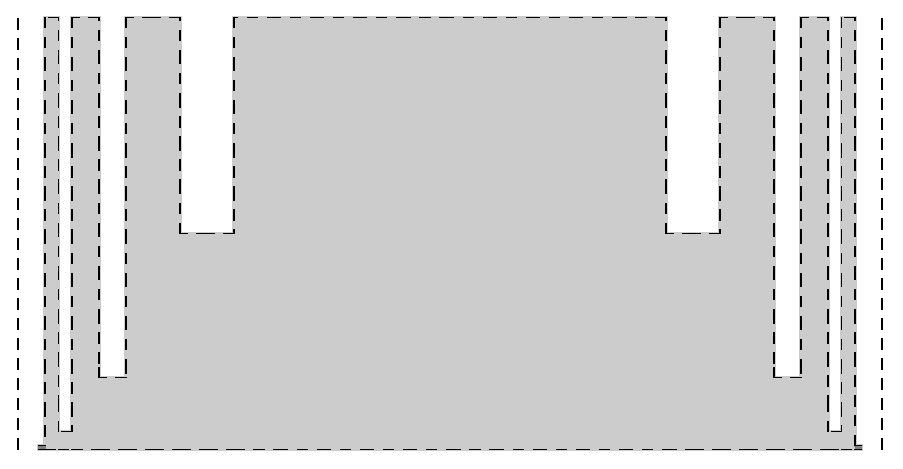}
}
\put(9.5,65){$\cdot \!\! \cdot \!\! \cdot$}
\put(251.5,65){$\cdot \!\! \cdot \!\! \cdot$}
\end{picture}
\caption{The rooms-and-halls domain.} \label{roomsnhallsfig}
\end{figure}

Now we construct a sequence of functions in $W^{1,p}(\Omega)$ that will demonstrate that $\Omega$ is not $p$-Poincar\'{e}.  Let
\begin{equation*}
	v_j(x)=
	\begin{cases}
		0&\text{ if }|x|<x_j\\
		2^{j+2}(x-x_j)&\text{ if }x_j\leq|x|\leq x'_j\\
		1&\text{ if }|x|>x'_j
	\end{cases}
\end{equation*}
for $j\in\Z^+$ and $x>0$, and let
\begin{equation*}
	u_j(x,y)=
	\begin{cases}
		v_j(x)&\text{ if }x\geq0\\
		-v_j(-x)&\text{ if }x<0
	\end{cases}
\end{equation*}
for $j\in\Z^+$.  Then $\{u_j\}_{j\in\Z^+}$ is the desired sequence of functions.  To see this, note that each $R_j$ and $H_j$ has width $1/2^{j+2}$.  Since ${u_j}_{\Omega}=0$,
\begin{equation*}
	\|u_j-{{u_j}_{\Omega}}\|_{L^p(\Omega)}^p
	    = \int_\Omega |u_j|^p\,dz
	    \geq 2\int_{R_j} |1|^p\,dz = 2\cdot\frac{1}{2^{j+2}}
\end{equation*}
and
\begin{equation*}
	\|\nabla u_j\|_{L^p(\Omega)}^p
	    = \int_\Omega |\nabla u_j|^p\,dz
	    = 2\int_{H_j} |2^{j+2}|^p\,dz
	    = 2\cdot(2^{j+2})^p
	        \left(\frac{1}{2^{j+2}}
	        \cdot\frac{1}{(j+1)!}\right).
\end{equation*}
Therefore,
\begin{equation*}
	a_j = \frac{\|u_j-{{u_j}_{\Omega}}\|_{L^p(\Omega)}}
	        {\|\nabla u_j\|_{L^p(\Omega)}}
	    \geq \left(\frac{(j+1)!}{(2^{j+2})^p}\right)^{\frac{1}{p}}
\end{equation*}
and this sequence diverges as $j\rightarrow\infty$ regardless of the choice of $p$.

Next, we use essential tubes to show that this rooms-and-halls domain is not $L^s$-averaging for any $1 \leq s < \infty$.  Note that the rectangles
$T_j = [x_j', x_{j+1}] \times \left[\frac{1}{2},\frac{3}{4}\right]$ are essential tubes for $j \geq 1$, and for each tube we have $r_j = \frac{1}{2^{j+3}}$, $l = \frac{1}{4}$, and $c = 1$.  Hence
\begin{align*}
E_{\{T_j\}} &= \sum_{j = 1}^{\infty} 1 \cdot \left(\frac{1}{2^{j+3}}\right)^2
\left(\frac{\frac{1}{4}}{\frac{1}{2^{j+3}}}\right)^{s+1} \\
    &= \frac{1}{4^{s+1}}
        \sum_{j = 2}^{\infty} (2^{s-1})^{j+3} \\
    &= \infty.
\end{align*}

Since the associated components $\Omega_{T_j}$ are pairwise disjoint, by Corollary \ref{tubeestcor}, $\Omega$ cannot be $L^s$-averaging.

Of course, this is not much of a surprise.  We already know that the rooms-and-halls domain is not $p$-Poincar\'{e}, so for $s \geq 2$ it cannot be $L^s$-averaging. The calculation above shows that it cannot be $L^s$-averaging for any $1 \leq s < \infty$.

The next, perhaps more interesting, example is a domain that is not $L^s$-averaging for any $s$, but is $p$-Poincar\'{e} for all $p$.  For $j\in\Z^+$, let $\theta_j = \left[1 - \left(\frac{1}{2}\right)^{j-1} \right] \pi$ and let $z_j = (\cos(\theta_j), \sin(\theta_j))$.  Let $R_j$ be the filled open rectangle with two vertices $z_j$ and $z_{j+1}$ and with the other two vertices lying on the circle of radius 3 centered at the origin.  Let $B$ be the open unit disk centered at the origin and define $\Omega$ to be the ``disk-and-rooms'' domain as follows:
\[
\Omega = \bigcup_{j = 1}^{\infty} R_j \cup B.
\]
See Figure \ref{diskandroomsfig}.
\begin{figure}
\begin{picture}(275,200)
\put(0,0){
\includegraphics[scale = 1, clip = true, draft = false]{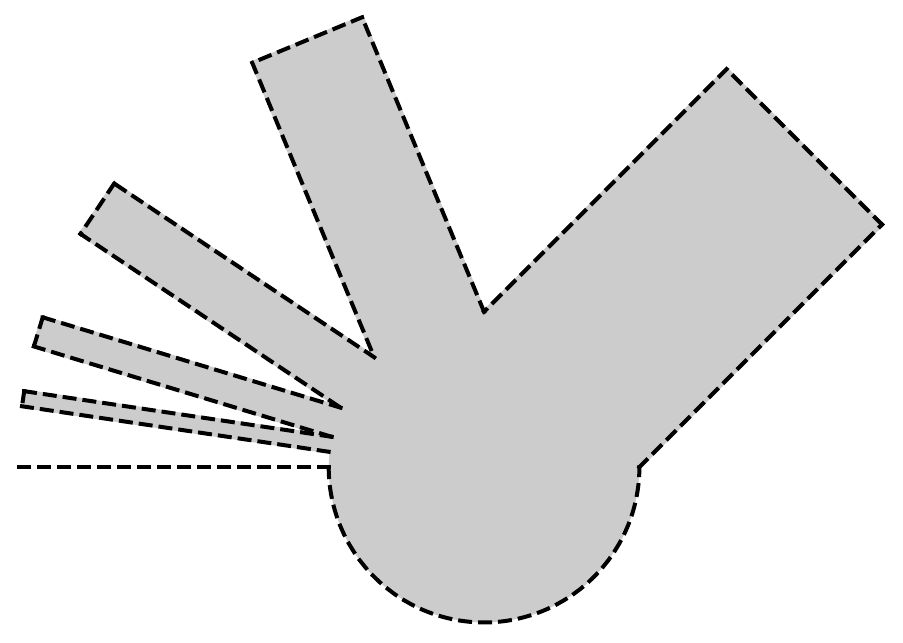}
}
\put(20,60){$\boldsymbol{\cdot}$}
\put(19,56){$\boldsymbol{\cdot}$}
\put(19,52){$\boldsymbol{\cdot}$}
\end{picture}
\caption{The disk-and-rooms domain.} \label{diskandroomsfig}
\end{figure}
Note that $\Omega$ is star-shaped with respect to the origin.  Hence, it is a $p$-Poincar\'{e} domain for all $p$.

Let $T_j$ be the filled closed rectangle with two vertices $z_j$ and $z_{j+1}$ and with the other two vertices lying on the circle of radius 2 centered at the origin.  Then the $T_j$ are essential tubes and the associated sets $\Omega_{T_j}$ are pairwise disjoint.  The parameters of $T_j$ can be estimated as follows:  $r_j < \frac{\theta_{j+1} - \theta_j}{2} = \frac{\pi}{2^{j+1}}$, $l_j > 1$, and $c_j = 1$.

With this, we have
\begin{align*}
E_{\{T_j\}} &\geq \sum_{j = 1}^{\infty} 1 \cdot \left(\frac{\pi}{2^{j+1}}\right)^2
\left(\frac{1}{\frac{\pi}{2^{j+1}}}\right)^{s+1} \\
    &= \frac{1}{\pi^{s-1}}
        \sum_{j = 1}^{\infty} (2^{s-1})^{j+1} \\
    &= \infty
\end{align*}
and therefore, by Corollary \ref{tubeestcor}, $\Omega$ cannot be $L^s$-averaging.

\section{Generalized Whitney Subdivision} \label{Whitneysec}

In this section, we discuss a general method that can be used to establish sufficient conditions for a domain to be $L^s$-averaging.  In some ways this complements essential tubes, but this method is not as concrete.

Given a domain $\Omega$, we say a collection $\mathcal{S}$ of sets is a \textit{valid subdivision} if it has the following properties:
\begin{itemize}
    \item Each element $S \in \mathcal{S}$ is a closed subset of $\Omega$;
    \item Each element $S \in \mathcal{S}$ is star-shaped;
    \item For all distinct pairs $S, T \in \mathcal{S}$, $|S \cap T| = 0$;
    \item $\left|\Omega - \bigcup_{S \in \mathcal{S}} S \right| = 0$.
    \item For every pair of points $z_0, z \in \bigcup_{S \in \mathcal{S}} S$, there is a sequence $\{S_i: 0 \leq i \leq j\} \subset \mathcal{S}$ such that $z_0 \in S_0$, $z \in S_j$, and $\partial S_i \cap \partial S_{i+1} \neq \emptyset$.
\end{itemize}
For each $S \in \mathcal{S}$, define two parameters:  let $d(S)$ be the diameter of $S$ and let $\delta(S)$ be the distance between $S$ and $\partial \Omega$.  We then have the following:
\begin{lemma} \label{kinitialestlemma}
Let $\mathcal{S}$ be a valid subdivision for $\Omega$ and let $z_0$ and $z$ be two points in $\bigcup_{S \in \mathcal{S}} S$.  Let $\{S_i: 0 \leq i \leq j\} \subset \mathcal{S}$ be a sequence of sets such that $z_0 \in S_0$, $z \in S_j$, and $\partial S_i \cap \partial S_{i+1} \neq \emptyset$.  Then
\[
k(z, z_0; \Omega) \leq 2\sum_{i = 0}^{j} \frac{d(S_i)}{\delta(S_i)}.
\]
\end{lemma}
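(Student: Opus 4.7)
The plan is to construct an explicit rectifiable curve from $z_0$ to $z$ that traverses the chain $S_0, S_1, \dots, S_j$ in order, and to estimate the quasihyperbolic integrand along it segment by segment. Since $k(z, z_0; \Omega)$ is defined as an infimum, any such curve will give an upper bound.

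For each $i$, use the star-shapedness of $S_i$ to pick a star-center $p_i \in S_i$, and for each $i$ with $0 \leq i \leq j-1$ use the hypothesis $\partial S_i \cap \partial S_{i+1} \neq \emptyset$ to pick a connecting point $q_i$ in this intersection (it lies in both $S_i$ and $S_{i+1}$ since these sets are closed). I would then build the piecewise-linear curve $\gamma$ that goes from $z_0$ to $p_0$ to $q_0$ to $p_1$ to $q_1$ to $\cdots$ to $q_{j-1}$ to $p_j$ to $z$. By the star-shaped property of $S_i$ centered at $p_i$, the two segments of $\gamma$ that enter and leave $S_i$ through its star-center (namely $\overline{q_{i-1}p_i}$ and $\overline{p_iq_i}$, with obvious modifications at the endpoints $i=0,j$ where the segment is $\overline{z_0p_0}$ or $\overline{p_jz}$) both lie entirely inside $S_i$, so $\gamma$ is contained in $\Omega$.

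The estimate then reduces to bounding the contribution from each $S_i$. The total length of the portion of $\gamma$ lying in $S_i$ is at most $2d(S_i)$, since each of the (at most two) segments has endpoints in $S_i$ and therefore length at most $\operatorname{diam}(S_i) = d(S_i)$. For any point $\zeta$ on this portion we have $\zeta \in S_i$, hence $d(\zeta, \partial\Omega) \geq \delta(S_i)$ directly from the definition of $\delta(S_i)$. Putting these two facts together yields
\[
\int_{\gamma \cap S_i} \frac{1}{d(\zeta, \partial\Omega)}\, d\sigma
\;\leq\; \frac{2\, d(S_i)}{\delta(S_i)},
\]
and summing over $i$ gives the desired bound on $k(z, z_0; \Omega)$.

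I do not expect a real obstacle here — the statement is essentially a bookkeeping consequence of the star-shaped condition combined with the definition of $\delta(S)$. The only subtlety to handle carefully is the endpoint segments at $S_0$ and $S_j$, where the curve connects $z_0$ (respectively $z$) to the star-center rather than to a boundary point $q_i$; these segments still have length at most $d(S_0)$ (respectively $d(S_j)$) because $z_0, p_0 \in S_0$ (respectively $z, p_j \in S_j$), so the same per-set bound of $2\,d(S_i)/\delta(S_i)$ applies uniformly across $i = 0,\ldots,j$.
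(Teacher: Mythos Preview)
Your argument is correct and follows essentially the same route as the paper: choose a star-center in each $S_i$, a connecting point in each $\partial S_i \cap \partial S_{i+1}$ (which lies in both closed sets), build the piecewise-linear path, and bound the contribution in $S_i$ by $2\,d(S_i)/\delta(S_i)$ using the length bound and the definition of $\delta(S_i)$. The paper's proof differs only in notation.
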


\begin{proof}
For $i \in \{0, \ldots, j\}$ let $\hat{z}_i \in S_i$ be a point relative to which $S_i$ is star-shaped.  For $i \in \{1, \ldots, j\}$, let $z_i \in S_{i-1} \cap S_i$, and let $z_{j+1} = z$.  For each $i \in \{0, \ldots, j\}$ let $\tilde{\gamma}_i:\tilde{I_i} \rightarrow S_i$ be a piecewise linear path connecting $z_i$ to $\hat{z}_i$ and then to $z_{i+1}$.  Note that in each $S_i$, $\tilde{\gamma}_i$ consists of two segments, both of which have length at most $d(S_i)$.  Then let $\tilde{\gamma}:\tilde{I} \rightarrow \Omega$ be the concatination of these $\tilde{\gamma}_i$.  See Figure \ref{validsubdivisionpathfig}.
\begin{figure}
\begin{picture}(300,180)
\put(0,0){
\includegraphics[scale = .6, clip = true, draft = false]{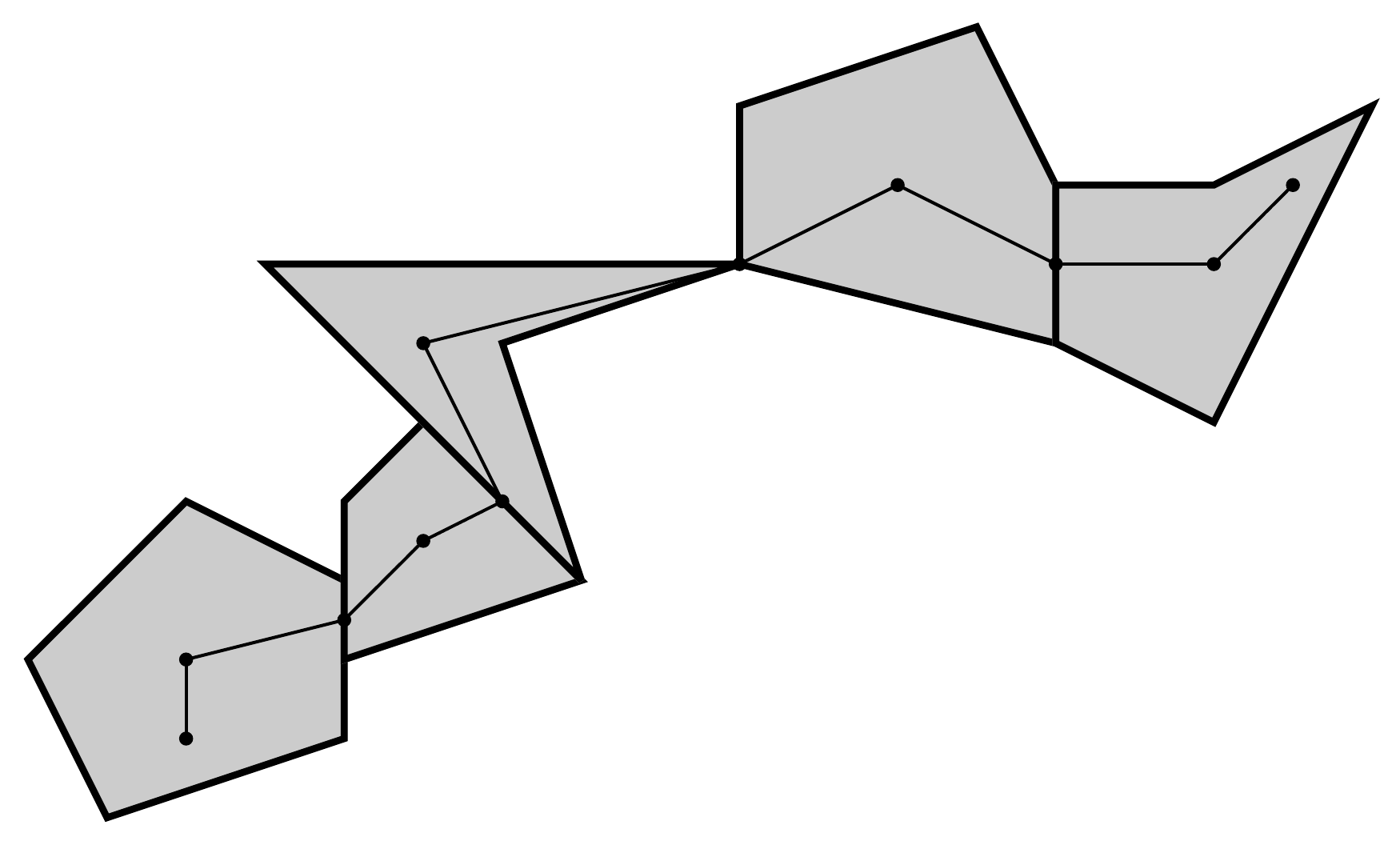}
}
\put(33,19){$z_0$}
\put(38,45){$\hat{z}_0$}
\put(68,51){$z_1$}
\put(86,70){$\hat{z}_1$}
\put(107,67){$z_2$}
\put(84,111){$\hat{z}_2$}
\put(160,118){$z_3$}
\put(193,148){$\hat{z}_3$}
\put(234,120){$z_4$}
\put(269,120){$\hat{z}_4$}
\put(285,146){$z$}

\end{picture}
\caption{Sets $S_0, \ldots, S_4$ in a valid subdivision, reference points, and path used for proof of Lemma \ref{kinitialestlemma}.} \label{validsubdivisionpathfig}
\end{figure}

This path provides the following estimate for $k(z, z_0; \Omega)$:
\begin{align*}
    k(z, z_0; \Omega)
        &= \inf_{\gamma} \int_{I_{\gamma}} \frac{|\gamma'(t)|}{d(\gamma(t), \partial D)} dt \\
        & \leq \int_{\tilde{I}} \frac{|\tilde{\gamma}'(t)|}{d(\tilde{\gamma}(t), \partial D)} dt \\
        &= \sum_{i = 0}^j \int_{\tilde{I_i}} \frac{|\tilde{\gamma}_i'(t)|}{d(\tilde{\gamma}_i(t), \partial D)} dt \\
        & \leq \sum_{i = 0}^j \int_{\tilde{I_i}} \frac{|\tilde{\gamma}_i'(t)|}{\delta(S_i)} dt \\
        &= \sum_{i = 0}^j \frac{1}{\delta(S_i)} \int_{\tilde{I_i}} |\tilde{\gamma}_i'(t)| dt \\
        & \leq \sum_{i = 0}^j \frac{1}{\delta(S_i)} 2 d(S_i)
\end{align*}
\end{proof}

The estimate in the above lemma is useful when the relationship between $d(S)$ and $\delta(S)$ is well behaved.  With this in mind, we say a valid subdivision $\mathcal{S}$ is a \textit{generalized Whitney subdivision} if there exists an $M$ such that for each $S \in \mathcal{S}$, $d(S) \leq M\delta(S)$.  We call $M$ the \textit{distance factor}.

For a generalized Whitney subdivision $\mathcal{S}$ of $\Omega$, with $z_0 \in S_0$, let $L_j$ be the union of those sets $S$ which are $j$ sets away from $S_0$ (so $L_0 = S_0$).  Then, we have the following upper bound for the $L^s$-integral of the quasihyperbolic distance:
\begin{lemma} \label{kestlemma}
Let $\mathcal{S}$ be a generalized Whitney subdivision for a domain $\Omega$ with distance factor $M$.  Let $z_0 \in S_0$.  Then for $z \in L_j$, $k_{\Omega}(z, z_0) \leq 2M(j+1)$ and
\[
\int_{\Omega} [k(z,z_0; \Omega)]^s\, dz
    \leq (2M)^s \sum_{j=0}^{\infty}(j+1)^s |L_{j}|.
\]
\end{lemma}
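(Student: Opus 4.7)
The plan is to establish the pointwise bound first, then integrate level by level.

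For the pointwise bound, fix $z \in L_j$, which means $z$ lies in some $S \in \mathcal{S}$ that is at graph-distance exactly $j$ from $S_0$ in the adjacency graph of the subdivision. By definition of graph-distance, there exists a chain $S_0, S_1, \ldots, S_j = S$ of sets from $\mathcal{S}$ with $\partial S_i \cap \partial S_{i+1} \neq \emptyset$ for each $i$. I would apply Lemma \ref{kinitialestlemma} to this chain, obtaining
\[
k(z, z_0; \Omega) \leq 2 \sum_{i=0}^{j} \frac{d(S_i)}{\delta(S_i)}.
\]
Since $\mathcal{S}$ is a generalized Whitney subdivision with distance factor $M$, each ratio $d(S_i)/\delta(S_i)$ is bounded by $M$, and the chain has $j+1$ terms, giving $k(z,z_0;\Omega) \leq 2M(j+1)$.

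For the integral bound, the valid subdivision axioms state that $|\Omega \setminus \bigcup_{S \in \mathcal{S}} S| = 0$ and that distinct $S, T \in \mathcal{S}$ intersect in a set of measure zero. Moreover, the connectivity axiom guarantees every $S \in \mathcal{S}$ is joined to $S_0$ by some finite chain, so every $S$ belongs to $L_j$ for some finite $j$ (the minimal such $j$). Thus, up to a set of measure zero, $\Omega$ is the disjoint union $\bigsqcup_{j \geq 0} L_j$, so
\[
\int_{\Omega} k(z, z_0; \Omega)^s \, dz = \sum_{j=0}^{\infty} \int_{L_j} k(z, z_0; \Omega)^s \, dz.
\]
Inserting the pointwise bound on each $L_j$ yields the claimed estimate
\[
\int_{\Omega} k(z,z_0;\Omega)^s \, dz \leq \sum_{j=0}^{\infty} (2M(j+1))^s |L_j| = (2M)^s \sum_{j=0}^{\infty} (j+1)^s |L_j|.
\]

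There is no real obstacle here beyond bookkeeping: the geometric work has already been packaged into Lemma \ref{kinitialestlemma}, and the distance-factor hypothesis replaces the chain sum by a uniform constant. The only minor point worth being explicit about is that one must interpret ``$j$ sets away'' as the graph-theoretic minimum so that the $L_j$ partition $\Omega$ up to measure zero; the valid subdivision conditions make this legitimate.
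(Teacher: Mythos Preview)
Your argument is correct and follows essentially the same route as the paper's proof: apply Lemma~\ref{kinitialestlemma}, bound each ratio $d(S_i)/\delta(S_i)$ by $M$, and then sum the resulting layer-by-layer estimate. If anything, you are slightly more explicit than the paper in justifying why the $L_j$ partition $\Omega$ up to a null set.
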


\begin{proof}
Using Lemma \ref{kinitialestlemma},
\begin{align*}
    k(z, z_0; \Omega)
        &\leq 2\sum_{i = 0}^{j} \frac{d(S_i)}{\delta(S_i)} \\
        &\leq 2 \sum_{i = 0}^{j} M \\
        &= 2M(j+1)
\end{align*}
and so
\begin{align*}
    \int_{\Omega} [k(z,z_0; \Omega)]^s\, dz
    &= \sum_{j=0}^{\infty}
        \int_{L_j} [k(z,z_0; \Omega)]^s\, dz \\
    &\leq \sum_{j=0}^{\infty}
        \int_{L_j}[2M(j+1)]^s\, dz \\
    &= (2M)^s \sum_{j=0}^{\infty} (j+1)^s
        \int_{L_j} dz \\
    &= (2M)^s \sum_{j=0}^{\infty} (j+1)^s |L_j|.
\end{align*}
\end{proof}

To demonstrate how such an estimate can be used, we show that a cube is $L^s$-averaging for all $s$.  This result is, of course, not new, but it illustrates how the analysis can be performed, and provides an upper bound to be used later.

\begin{lemma} \label{cubelemma}
Let $\Omega$ be the unit cube in $\mathbb{R}^n$ and let $z_0$ be the center point of $\Omega$.  Then
\begin{equation} \label{ksquareesteqn}
\int_{\Omega}[k(z, z_0; \Omega)]^s\, dz
    \leq C(n,s) \sum_{j=0}^{\infty}(j+1)^s \left(\frac{1}{3}\right)^j
\end{equation}
which is finite for all $s \geq 1$.
\end{lemma}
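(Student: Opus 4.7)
The plan is to apply Lemma \ref{kestlemma} to an explicit generalized Whitney subdivision of the unit cube. Once we produce a subdivision with distance factor $M = M(n)$ whose graph-distance layers $L_j$ satisfy $|L_j| \leq C(n)(1/3)^j$, the stated bound (\ref{ksquareesteqn}) follows immediately, and finiteness for all $s \geq 1$ is handled by the ratio test applied to $\sum (j+1)^s (1/3)^j$.

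Take $\Omega = [0,1]^n$ and $z_0 = (1/2,\ldots,1/2)$. Let $r_j = \tfrac{1}{2}(1-3^{-j})$ and $B_j = [1/2 - r_j,\,1/2 + r_j]^n$, with shells $A_j = B_{j+1}\setminus B_j^{\circ}$. Set $S_0 = A_0 = B_1$ (a cube of side $2/3$), and for each $j \geq 1$ subdivide $A_j$ by tiling $B_{j+1}$ with closed axis-aligned cubes of side $3^{-(j+1)}$ and discarding those contained in $B_j^{\circ}$. Because $\partial B_j$ lies on the grid of side $3^{-(j+1)}$, no cube straddles it; each piece is a cube (hence star-shaped) with pairwise disjoint interiors, and their union is $\Omega$ up to a set of measure zero. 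A cube in $A_j$ has diameter $\sqrt{n}\cdot 3^{-(j+1)}$ and lies at distance at least $\tfrac{1}{2}\cdot 3^{-(j+1)}$ from $\partial\Omega$, while $S_0$ has $d(S_0)/\delta(S_0) = (2\sqrt{n}/3)/(1/6) = 4\sqrt{n}$. Hence the generalized Whitney condition holds with $M = 4\sqrt{n}$.

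The critical step is to show $L_j = A_j$. Since adjacent pieces lie in consecutive shells (indices differing by at most one), the graph distance from $S_0$ to any cube in $A_j$ is at least $j$. For the reverse direction, I argue inductively that every cube in $A_j$ is adjacent to some cube in $A_{j-1}$ (or to $S_0$ when $j = 1$): each cube in $A_j$ has at least one face lying on $\partial B_j$, the cubes of $A_{j-1}$ cover a neighborhood of $\partial B_j$ on the $B_j$-side, and so some cube of $A_{j-1}$ shares a boundary point (possibly only a corner) with our $A_j$ cube. Together with the inductive hypothesis, this yields graph distance at most $j$. A direct computation gives $|A_0| = (2/3)^n$ and, by the mean value theorem, $|A_j| = (1-3^{-(j+1)})^n - (1-3^{-j})^n \leq (2n/3)\,3^{-j}$ for $j \geq 1$, so $|L_j| \leq C(n)\,3^{-j}$. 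Substituting into Lemma \ref{kestlemma} finishes the proof.

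The main obstacle is the adjacency analysis in the corner case: a corner cube of $A_j$ shares only a single corner point of $B_j$ with the corner cube of $A_{j-1}$, not a full face, so at first glance one might worry the graph distance blows up. The saving grace is that the paper's definition of a valid subdivision requires only $\partial S_i \cap \partial S_{i+1} \neq \emptyset$, so single-point contact is permissible and keeps the graph distance equal to the shell index throughout the construction.
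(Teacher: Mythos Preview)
Your proposal is correct and follows essentially the same approach as the paper: both build a generalized Whitney subdivision of the cube out of concentric cubic shells, verify the distance factor is $O(\sqrt{n})$, show $|L_j|\le C(n)3^{-j}$, and plug into Lemma~\ref{kestlemma}. The differences are cosmetic---the paper starts with a central cube of side $1/2$ and shell cubes of side $\tfrac{1}{2}3^{-j}$ (distance factor $2\sqrt{n}$), whereas you take side $2/3$ and shell cubes of side $3^{-(j+1)}$ (distance factor $4\sqrt{n}$)---and your direct shell-volume estimate $|A_j|=|B_{j+1}|-|B_j|$ is arguably cleaner than the paper's cube-counting. One small exposition slip: the claim ``each cube in $A_j$ has at least one face lying on $\partial B_j$'' is false for corner cubes, which only meet $\partial B_j$ at a single vertex; but you already identify and correctly handle exactly this issue in your final paragraph, so the argument stands.
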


\begin{proof}
Let $L_0 = S_0$ be the closed cube of side length $\frac{1}{2}$ centered at $z_0$.  After this, to produce the $j$th layer of cubes, subdivide each exposed $n-1$ dimensional face of the cubes in the $(j-1)$st layer into $3^{n-1}$ congruent square pieces and let these be the faces of a new set of cubes of side length $\frac{1}{2}\frac{1}{3^j}$.  These cubes enclose most of the $(j-1)$st layer, but there are still lower dimensional components that are accessible.  With this in mind, complete the layer by adding more cubes of the same size so as to completely enclose the $(j-1)$st layer.  See Figure \ref{squarewhitneysubfig}.

\begin{figure}
\begin{picture}(265,265)
\put(0,0){
\includegraphics[scale = 1, clip = true, draft = false]{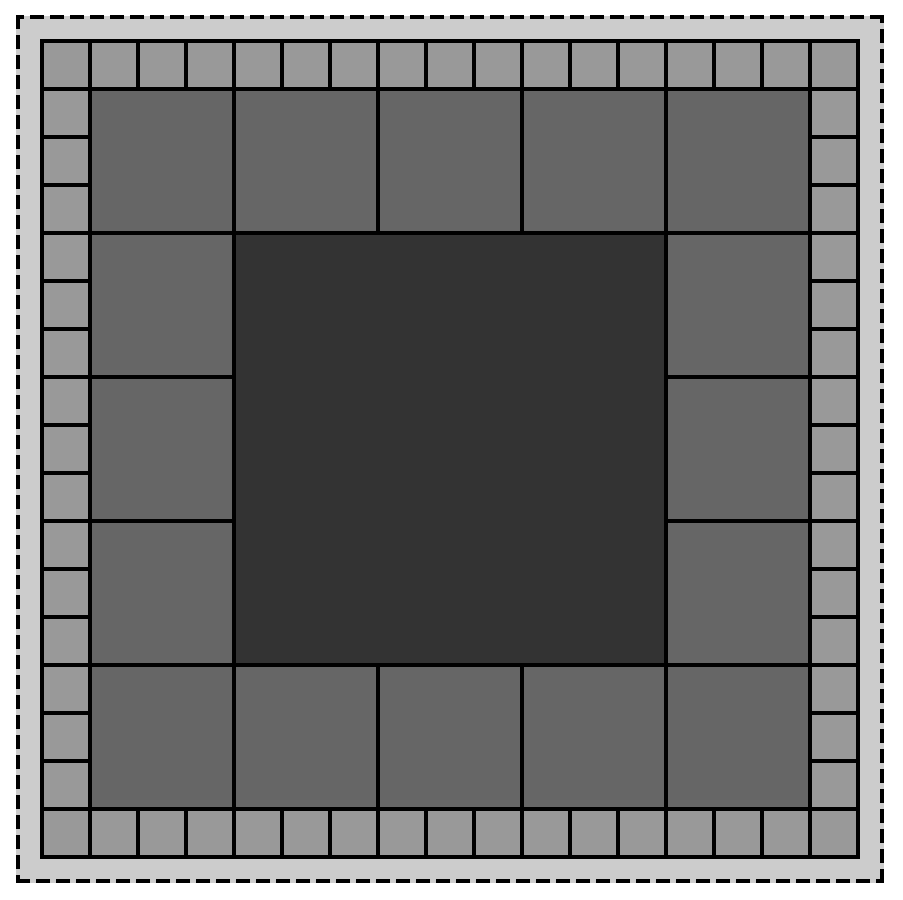}
}
\put(9,128){$\cdot \!\! \cdot \!\! \cdot$}
\put(252,128){$\cdot \!\! \cdot \!\! \cdot$}
\put(132,247){$\cdot$}
\put(132,249){$\cdot$}
\put(132,251){$\cdot$}
\put(132,9){$\cdot$}
\put(132,7){$\cdot$}
\put(132,5){$\cdot$}
\put(252,9){$\cdot$}
\put(254,7){$\cdot$}
\put(256,5){$\cdot$}
\put(252,247){$\cdot$}
\put(254,249){$\cdot$}
\put(256,251){$\cdot$}
\put(13,247){$\cdot$}
\put(11,249){$\cdot$}
\put(9,251){$\cdot$}
\put(13,9){$\cdot$}
\put(11,7){$\cdot$}
\put(9,5){$\cdot$}
\end{picture}
\caption{Generalized Whitney subdivision of a square.  The first three layers $L_0$, $L_1$, and $L_2$ are shown in increasingly lighter shades of gray.} \label{squarewhitneysubfig}
\end{figure}

As a union of small cubes, these layers are hollow cubes.  Let $e_i$ be the number of cubes along a one dimensional edge.  Then $e_0 = 1$ and $e_j = 3 e_{j-1}+2$, and from this we can conclude that $e_j = 2 \cdot 3^j - 1$.

Let $\nu_j$ be the number of cubes creating $L_j$.  Then $\nu_0 = 1$, and since an $n$-dimensional cube has $2n$ faces, for $j > 1$
\begin{align*}
\nu_j &< 2n e_j^{n-1} \\
    &= 2n (2 \cdot 3^j - 1)^{n-1} \\
    &< 2n (2 \cdot 3^j)^{n-1} \\
    &= 2^n n 3^{j(n-1)}
\end{align*}
where the first inequality comes from the fact that we are over counting the cubes included to cover the lower dimensional edges.

Since the side length of each cube $S$ in $L_j$ is $\frac{1}{2} \left(\frac{1}{3}\right)^j$, the diameter is $d(S) = \frac{\sqrt{n}}{2}\frac{1}{3^j}$, and for $j \geq 1$
\begin{align*}
    \delta(S) &= \frac{1}{4} - \sum_{i=1}^j \frac{1}{2} \left(\frac{1}{3}\right)^i \\
        &= \frac{1}{4} - \frac{1}{4}\left(1-\frac{1}{3^j}\right) \\
        &= \frac{1}{4 \cdot 3^j}.
\end{align*}
Hence, $d(S) =2 \sqrt{n}\, \delta(S)$ and so this is a generalized Whitney subdivision with a distance factor of $2 \sqrt{n}$.

Estimating the measure of $L_j$, we have
\begin{align*}
    |L_j| &= \nu_j             \left(\frac{1}{2\cdot3^j}\right)^n \\
    &\leq 2^n n 3^{j(n-1)} \frac{1}{2^n}\frac{1}{3^{jn}} \\
    &= n \frac{1}{3^j}. \\
\end{align*}

Applying Lemma \ref{kestlemma} we have Equation \eqref{ksquareesteqn} which converges for all $s$.
\end{proof}

For a given domain, cubes may not be an ideal object for subdivision, and other shapes, tailored to the domain, can be used.  An example is provided in the next section.

\section{Cusps} \label{cuspsec}

As a family, cusps demonstrate that a domain can be $L^s$-averaging for some $s$ and not others.  The cusps analyzed here were explored in \cite{Staples}.  We confirm those results using essential tubes and generalized Whitney subdivision.

\begin{thm} \label{cuspthm}
For $\alpha \geq 0$, let $\Omega_{\alpha} \in \mathbb{R}^n$ be the following domain:
\[
\Omega_{\alpha} = \{(x,y_1, y_2, \ldots, y_{n-1}): 0 < x < 1, (y_1^2 + y_2^2 + \cdots + y_{n-1}^2)^{\frac{1}{2}} < x^{\alpha}\}.
\]
Then $\Omega_{\alpha}$ is an $L^s$-averaging domain if and only if
\[
(\alpha - 1)(s - n + 1) < n.
\]
\end{thm}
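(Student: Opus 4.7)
The plan is to establish the two implications separately: necessity via essential tubes and Corollary \ref{tubeestcor}, and sufficiency via a generalized Whitney subdivision together with Lemma \ref{kestlemma}. For convenience, write $x_j = 2^{-j}$ throughout. The case $\alpha \leq 1$ is benign, since the inequality $(\alpha-1)(s-n+1) < n$ holds automatically and the domain is cone-like or wider; the interesting regime for both arguments is $\alpha > 1$, where the sharp threshold arises.

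For necessity, suppose $(\alpha-1)(s-n+1) \geq n$. For each $j \geq 1$, I would take an essential tube $T_j$ aligned with the cusp's axis, having $x$-extent $[x_j, 2x_j]$, length $l_j = x_j$, and radius $r_j = (2x_j)^{\alpha}$. Since $t^\alpha \leq r_j$ for $t \in [x_j, 2x_j]$ with equality only at $t = 2x_j$, the wall of $T_j$ lies outside $\Omega_\alpha$, so $\Omega_{T_j}$ is exactly the piece of the cusp over $[x_j, 2x_j]$, disjoint from the wall. The slice-measure ratio is bounded below by $(x_j^\alpha/r_j)^{n-1} = 2^{-\alpha(n-1)}$, giving a uniform $c_j = 2^{-\alpha(n-1)}$, and the $\Omega_{T_j}$ are pairwise disjoint. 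A direct computation then reduces the tube summand to
\[
c_j\, r_j^n \left(\frac{l_j}{r_j}\right)^{s+1} = 2^{-\alpha(n-1)}\, 2^{\alpha(n-s-1)}\, 2^{\,j[(\alpha-1)(s-n+1)-n]},
\]
whose exponent on $j$ is nonnegative precisely when $(\alpha-1)(s-n+1) \geq n$. Hence $E_{\mathcal{T}} = \infty$ in that regime, and Corollary \ref{tubeestcor} gives that $\Omega_\alpha$ is not $L^s$-averaging.

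For sufficiency, assume $(\alpha-1)(s-n+1) < n$. I would partition $\Omega_\alpha$ into dyadic slabs $A_j = \Omega_\alpha \cap \{x_{j+1} \leq x \leq x_j\}$, and within each slab fill by star-shaped cells of diameter comparable to $h_j := 2^{-(j+1)\alpha}$: axis-aligned cubes in the interior, with slightly modified star-shaped cells adjacent to the curved boundary $|y| = x^\alpha$. Because the distance from any such cell to $\partial \Omega_\alpha$ is also comparable to $h_j$, the result is a generalized Whitney subdivision with distance factor $M$ depending only on $n$ and $\alpha$. A count along the axis shows each slab $A_i$ contributes $\sim 2^{i(\alpha-1)}$ cells to a combinatorial chain traversing it, so every cell $S \subset A_j$ lies in a layer $L_k$ with $k \leq C \sum_{i=1}^{j} 2^{i(\alpha-1)} \leq C' 2^{j(\alpha-1)}$. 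Using $|A_j| \sim 2^{-j(\alpha(n-1)+1)}$ and reorganizing the sum in Lemma \ref{kestlemma} by slab then yields
\[
\int_{\Omega_\alpha} [k(z,z_0;\Omega_\alpha)]^s\, dz \leq C(n,s,\alpha) \sum_{j=0}^{\infty} 2^{\,j[(\alpha-1)(s-n+1)-n]},
\]
a geometric series that converges precisely when $(\alpha-1)(s-n+1) < n$. Staples's characterization recalled in the background section then gives that $\Omega_\alpha$ is $L^s$-averaging.

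The hard part will be the boundary-adjacent cells in the sufficiency argument: each cell must be genuinely star-shaped and satisfy $d(S) \leq M \delta(S)$ uniformly in $j$, despite the boundary $|y| = x^\alpha$ becoming extremely flat near $x = 0$. I would either absorb thin boundary strips into slightly enlarged star-shaped pieces still of diameter $\sim h_j$, or use cells built from a radial projection in the $y$-direction onto the curved boundary, and then verify that the resulting adjacency graph still admits the combinatorial-distance bound $k_S \leq C\, 2^{j(\alpha-1)}$. Once this geometric bookkeeping is complete, the convergence analysis on both sides is a routine power count, and the sharp threshold $(\alpha-1)(s-n+1) = n$ emerges symmetrically from the two calculations.
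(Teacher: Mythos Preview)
Your necessity argument via essential tubes is correct and essentially identical to the paper's part one; the dyadic tubes and the power count match.

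The sufficiency argument, however, has a genuine gap. You propose filling each slab $A_j$ with cells \emph{all} of diameter comparable to $h_j = 2^{-(j+1)\alpha}$, and you assert that the distance from every such cell to $\partial\Omega_\alpha$ is also comparable to $h_j$. These two claims are incompatible with covering $A_j$ up to measure zero: any valid subdivision in the sense of Section~\ref{Whitneysec} requires closed cells contained in $\Omega_\alpha$, so cells abutting the curved boundary $|y|=x^\alpha$ must have $\delta(S)$ arbitrarily small, and then $d(S)\le M\delta(S)$ forces their diameters to shrink. In other words, a single scale $h_j$ per slab cannot produce a generalized Whitney subdivision. Your proposed fix of ``absorbing thin boundary strips into slightly enlarged pieces still of diameter $\sim h_j$'' does not resolve this, because those enlarged pieces then violate the distance-factor bound; and your alternative of radial-projection cells is left too vague to carry the combinatorial bound $k_S\le C\,2^{j(\alpha-1)}$ you rely on.

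The paper handles exactly this issue by introducing, within each dyadic $x$-slab, an \emph{additional} radial layering: sets $S_{j,m}$ lying between the curves $r=(1-2^{1-\ell})x^\alpha$ and $r=(1-2^{-\ell})x^\alpha$, where $\ell=\lfloor\log_2 m\rfloor+1$. These sets shrink geometrically as $\ell\to\infty$, so the Whitney property holds with a distance factor depending only on $\alpha$. The price is that the quasihyperbolic estimate becomes $k(z,z_0;\Omega_\alpha)\le C(\alpha)(1+\ell)\,2^{(\alpha-1)j}$ rather than $C\,2^{(\alpha-1)j}$, and the combinatorial-distance bound you want fails for large $\ell$. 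The reason the argument still closes is a measure estimate: $|S_{j,m}|\le C\,2^{-j[\alpha(n-1)+1]}\,2^{-2\ell}$, so the resulting double sum factors as
\[
\Bigl(\sum_{j\ge1} 2^{\,j[(\alpha-1)s-(\alpha(n-1)+1)]}\Bigr)\Bigl(\sum_{m\ge1}\frac{(1+\ell)^s}{2^{2\ell}}\Bigr),
\]
and the second factor converges for every $s$ (it is dominated by $\sum_m (\log m)^s/m^2$). Thus the radial refinement costs only a harmless multiplicative constant in $s$, and the threshold $(\alpha-1)(s-n+1)<n$ is determined entirely by the $j$-sum, exactly as you predicted. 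Your outline becomes correct once this radial layer index $\ell$ is introduced and tracked through both the $k$-estimate and the measure count.
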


Note that if $0 \leq \alpha \leq 1$ then $\Omega_{\alpha}$ is a John domain and hence $L^s$-averaging for all $s \geq 1$.  At the same time, it is straightforward to check that the inequality is satisfied.  With these observations in mind, we restrict to $\alpha > 1$ for the remainder of this section.

The proof is broken into parts.  In the first part essential tubes are used to show that a cusp cannot be $L^s$-averaging if the given inequality is not satisfied.  In the second part, a generalized Whitney subdivision is used to show the converse.

\begin{proof}[Proof (part one)] 
Let $0 < a < b < 1$ and consider the tube centered along the $x$-axis with ends at $a$ and $b$, and its radius equal to $b^{\alpha}$.  This is an essential tube with $r = b^{\alpha}$, $l = b-a$, and $c = \left(\frac{a}{b}\right)^{\alpha n}$.

Using the sequence $a_j = \frac{1}{2^j}$, consider the essential tubes $T_j$ defined as above using $a = a_j$ and $b = a_{j-1}$  Then
\begin{align*}
    E_{T_j}
        &= \sum_{j = 3}^{\infty} \left(\frac{a_j}{a_{j-1}}\right)^{\alpha n} (a_{j-1})^{\alpha n}
        \left(\frac{a_{j-1} - a_j}
            {(a_{j-1})^{\alpha}}\right)^{s+1} \\
        &= 2^{-\alpha(s+1)} \sum_{j = 3}^{\infty} \left(2^{(\alpha - 1)(s-n+1)-n}\right)^j.
\end{align*}
If $(\alpha-1)(s-n+1) \geq n$ then this series diverges, so by Corollary \ref{tubeestcor}, $\Omega_{\alpha}$ cannot be $L^s$-averaging.
\end{proof}

For the other direction, we first establish some structure and initial results.  Because $\Omega_{\alpha}$ is symmetric about the $x$-axis, it is beneficial to work in cylindrical coordinates $(x, r, \theta)$.  In these coordinates,
\[
\Omega_{\alpha} = \{(x, r, \theta):0 < x < 1, r < x^{\alpha}, \theta \in \mathbb{S}^{n-2}\},
\]
and the volume element is
\[
dx\, dy_1 \cdots dy_{n-1} = r^{n-2} dx\, dr\, d\theta
\]
where $d \theta$ is the volume element for the unit $(n-2)$-sphere $\mathbb{S}^{n-2}$.

Given $j,m \in \mathbb{Z}^+$, let
\[
S_{j,m} = \left\{(x, r, \theta): \frac{m}{2^{j+\ell}} \leq x \leq \frac{m+1}{2^{j+\ell}}, \left(1 - \frac{1}{2^{\ell-1}}\right) x^{\alpha} \leq r \leq \left(1 - \frac{1}{2^{\ell}}\right) x^{\alpha} \right\}
\]
where $\ell = \lfloor\log_2(m)\rfloor + 1$.  See Figure \ref{cuspwhitneyfig}.

\begin{figure}
\begin{picture}(330,150)
\put(0,-2){
\includegraphics[scale = .7, clip = true, draft = false]{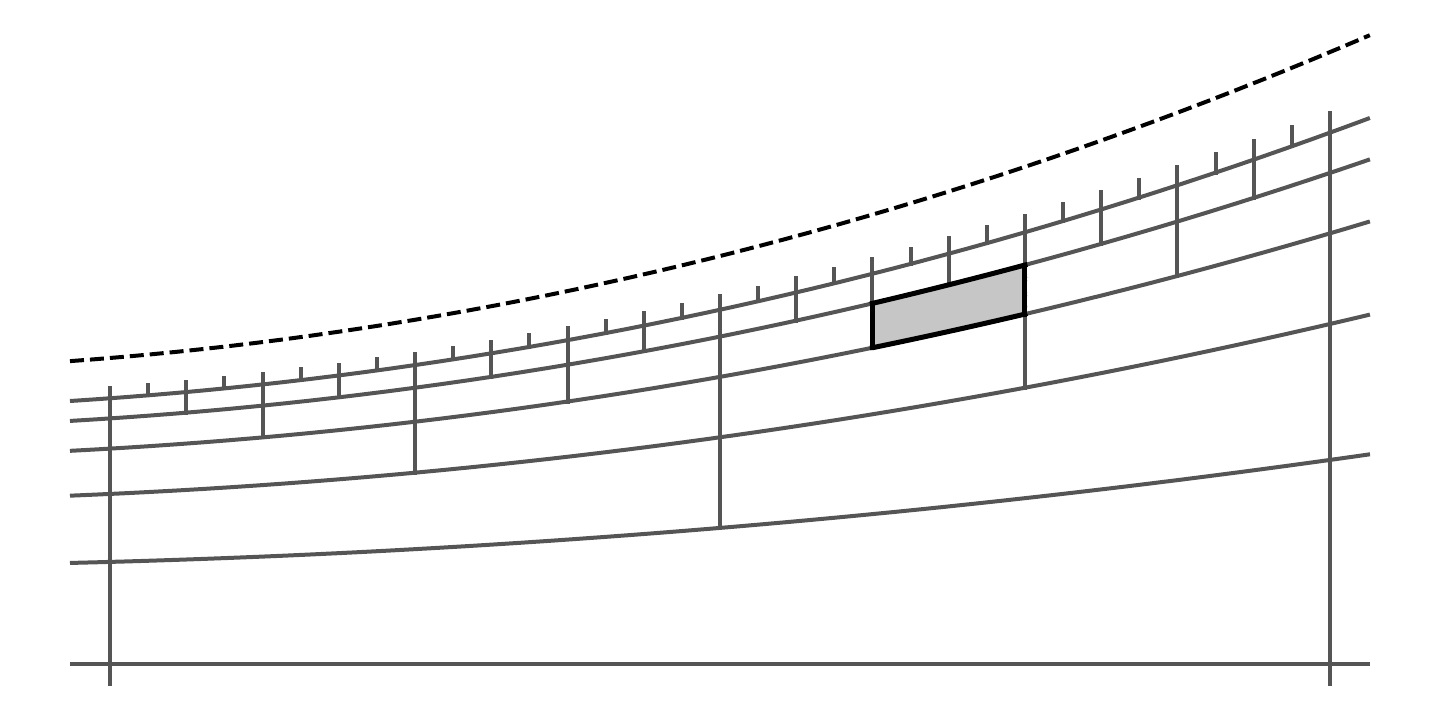}
}
\put(65,68){$\cdot$}
\put(65,70){$\cdot$}
\put(65,72){$\cdot$}
\put(120,77){$\cdot$}
\put(120,79){$\cdot$}
\put(120,81){$\cdot$}
\put(176,90){$\cdot$}
\put(176,93){$\cdot$}
\put(176,96){$\cdot$}
\put(231,108){$\cdot$}
\put(231,111){$\cdot$}
\put(231,114){$\cdot$}
\put(286,138){$r = x^{\alpha}$}
\put(286,99){$r = \frac{7}{8}x^{\alpha}$}
\put(286,80){$r = \frac{3}{4}x^{\alpha}$}
\put(286,51){$r = \frac{1}{2}x^{\alpha}$}
\put(286,8){$r = 0$}
\put(295,111){$\boldsymbol{\cdot}$}
\put(295,120){$\boldsymbol{\cdot}$}
\put(295,129){$\boldsymbol{\cdot}$}
\put(147,18){1}
\put(82,37){\rotatebox{4}{10}}
\put(206,50){\rotatebox{7}{11}}
\put(49,47){\rotatebox{4}{\small 100}}
\put(111,53){\rotatebox{6}{\small 101}}
\put(173,63){\rotatebox{8}{110}}
\put(235,75){\rotatebox{11}{111}}
\put(34,55){\rotatebox{5}{\tiny 1000}}
\put(65,58){\rotatebox{6}{\tiny 1001}}
\put(96,62){\rotatebox{7}{\tiny 1010}}
\put(127,66){\rotatebox{9}{\tiny 1011}}
\put(158,72){\rotatebox{10}{\tiny 1100}}
\put(187,78){\rotatebox{11}{\scriptsize 1101}}
\put(218,86){\rotatebox{12}{\scriptsize 1110}}
\put(249,95){\rotatebox{15}{\scriptsize 1111}}
\put(10,-3){$x = \frac{1}{2^{j+1}}$}
\put(255,-3){$x = \frac{1}{2^j}$}
\end{picture}
\caption{Part of the generalized Whitney subdivision of a Cusp Domain.  Note that in order to show detail, the curves representing the layers are not to scale.  The sets $S_{j,m}$ are labeled here with the binary representation of $m$.  Moving from one such set to the set below it, the label is truncated by removing the right-most digit.  For example, starting at $S_{j,13}$ (shaded), the sets below it have indices 6, 3, and 1.} \label{cuspwhitneyfig}
\end{figure}

These sets are created by first dividing the domain into disks indexed by $j$, then layers indexed by $\ell$, and finally further subdividing into the sets described.  Since, in the third step, each layer is subdivided into twice as many sets as the previous layer, the number of digits in the base-2 representation of $m$ is the layer $\ell$ so that $m < 2^{\ell} \leq 2m$.

This subdivision is not quite a generalized Whitney subdivision for two different reasons.  First the union of all of the $S_{j,m}$ misses a significant portion of $\Omega$.  We define $S_0 = \Omega_{\alpha} \cap \left\{(x, r, \theta): x > \frac{1}{4}\right\}$.  This set is a John domain and we do not attempt to subdivide it.

Second, when $m > 1$, the sets $S_{j,m}$ are not star-shaped.  When $n = 2$, each $S_{j,m}$ is the disjoint union of two sets, one lying above the $x$-axis and one lying below.  To resolve this, for $m > 1$ let $S_{j,m}^+ = S_{j,m} \cap \{y_1 > 0\}$ and let $S_{j,m}^- = S_{j,m} \cap \{y_1 < 0\}$.

When $n > 2$, each $S_{j,m}$ has a solid ring shape, or disk-like shape if $m = 1$.  To formally make use of the observations about diameter in the previous section, we could further subdivide each $S_{j,m}$ into star-shaped regions through some subdivision of $\mathbb{S}^{n-2}$.  However, we will find that because of the choice of the path, each such region would contribute the same, so we keep $S_{j,m}$ as a single set.

To help with calculations later, we have the following:

\begin{lemma} \label{cuspdiameterlemma}
For each $j$, $m$, denote the radial thickness and horizontal width of $S_{j,m}$ by $d_r$ and $d_x$ respectively.  Then
\[
d_r < 2 d_x.
\]
\end{lemma}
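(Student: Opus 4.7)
The plan is to evaluate $d_x$ and $d_r$ explicitly from the definition of $S_{j,m}$, reduce the target inequality to a clean algebraic condition on $\alpha$ and $j$ alone, and verify it via a standard tangent-line estimate on $2^x$.

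Setting $u = m/2^{j+\ell}$ and $v = (m+1)/2^{j+\ell}$, one reads off $d_x = v - u = 1/2^{j+\ell}$ immediately. For $d_r$, both coefficients $1 - 2^{1-\ell}$ and $1 - 2^{-\ell}$ are nonnegative (as $\ell \geq 1$), so the two $r$-boundary functions $x \mapsto (1 - 2^{1-\ell})x^\alpha$ and $x \mapsto (1 - 2^{-\ell})x^\alpha$ are both nondecreasing; hence the extreme $r$-values in $S_{j,m}$ are attained at the $x$-endpoints, giving
\[
d_r = (1 - 2^{-\ell}) v^\alpha - (1 - 2^{1-\ell}) u^\alpha = (1 - 2^{-\ell})(v^\alpha - u^\alpha) + 2^{-\ell} u^\alpha.
\]

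Next, apply the mean value theorem to bound $v^\alpha - u^\alpha \leq \alpha v^{\alpha - 1} d_x$. From $2^{\ell - 1} \leq m < 2^\ell$ one has $v \leq 1/2^j$, so $\alpha > 1$ yields $v^{\alpha - 1} \leq 2^{-j(\alpha - 1)}$ and similarly $u^\alpha \leq 2^{-j\alpha}$. The algebraic identity $2^{-\ell - j\alpha} = 2^{-j(\alpha - 1)} \cdot d_x$ then packages everything into
\[
d_r \leq (\alpha + 1) \cdot 2^{-j(\alpha - 1)} \cdot d_x.
\]

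To finish, it suffices to show $(\alpha + 1) \cdot 2^{-j(\alpha - 1)} < 2$, equivalently $2^{j(\alpha - 1)} > (\alpha + 1)/2$. The tangent-line inequality $2^x > 1 + x \ln 2$ for $x > 0$, together with $j \geq 1$ and $\ln 2 > 1/2$, gives
\[
2^{j(\alpha - 1)} > 1 + j(\alpha - 1) \ln 2 \geq 1 + (\alpha - 1) \ln 2 > 1 + \tfrac{\alpha - 1}{2} = \tfrac{\alpha + 1}{2},
\]
completing the proof. The main delicacy lies in the second step: one needs tight enough bookkeeping to arrive at the factor $(\alpha + 1)$, since coarser estimates (for instance $v^{\alpha-1} \leq (2u)^{\alpha-1}$, which would produce $\alpha \cdot 2^{\alpha-1}$ instead) fail to beat the $2$ on the right-hand side in the near-boundary regime $\alpha \to 1^+$, $j = 1$, where numerical experimentation shows the ratio $d_r/d_x$ can approach $2$.
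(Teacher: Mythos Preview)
Your proof is correct, but it diverges from the paper's in how it interprets ``radial thickness.''  The paper takes $d_r$ to be the maximum over $x$ of the slice thickness $(1-2^{-\ell})x^\alpha-(1-2^{1-\ell})x^\alpha = x^\alpha/2^\ell$, which is exactly the quantity needed later when bounding the length of the radial leg of the path; this gives the closed form $d_r = v^\alpha/2^\ell$, and the ratio $d_r/d_x$ then simplifies directly to $\bigl((m+1)/2^\ell\bigr)^\alpha \cdot 2^{-(\alpha-1)j}$, which is bounded by a short chain of elementary inequalities using $m<2^\ell$ and $j\geq 1$.  You instead take $d_r$ to be the full $r$-extent of the set, namely $(1-2^{-\ell})v^\alpha - (1-2^{1-\ell})u^\alpha$, which is genuinely larger (the difference is $(1-2^{1-\ell})(v^\alpha-u^\alpha)\geq 0$).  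Because of this you need the extra mean-value-theorem and tangent-line steps to close the argument, and as you observe, your ratio really can approach $2$, so those steps cannot be shortcut.

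In short: the paper's route is quicker and tailored to the subsequent application, while yours proves a slightly stronger inequality at the cost of more machinery.  Both are valid; if you want the shortest path to what the rest of the section actually uses, adopt the paper's slice-wise definition of $d_r$.
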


\begin{proof}
The horizontal width is
\begin{align*}
d_x &= \frac{m+1}{2^{j + \ell}} - \frac{m}{2^{j + \ell}} \\
    &= \frac{1}{2^{j + \ell}}.
\end{align*}
The radial thickness for a given $x$ is
\begin{align*}
d_r(x) &= \left(1 - \frac{1}{2^\ell}\right)x^{\alpha}
   - \left(1 - \frac{1}{2^{\ell-1}}\right)x^{\alpha} \\
   &= \frac{x^{\alpha}}{2^\ell},
\end{align*}
and this quantity is maximized at the right end of $S_{j,m}$ at $x = \frac{m+1}{2^{j+\ell}}$, so
\[
d_r = \frac{\left(\frac{m+1}{2^{j+\ell}}\right)^{\alpha}}{2^\ell}.
\]
Using the fact that $m < 2^{\ell}$, the ratio of these distances is
\begin{align*}
\frac{d_r}{d_x}
    &= \frac{\frac{
\left(\frac{m+1}{2^{j+\ell}}\right)^{\alpha}}
{2^{\ell}}}
{\frac{1}{2^{j+\ell}}} \\
&< \left(\frac{2^{\ell}+1}{2^{\ell}}\right)^{\alpha}
\frac{1}{2^{(\alpha-1)j}} \\
&< \frac{2^{\alpha}}{2^{(\alpha-1)j}} \\
&< \frac{2^{\alpha}}{2^{\alpha-1}} \\
&= 2.
\end{align*}
\end{proof}

The curves used to estimate the quasihyperbolic distance for $\Omega_{\alpha}$ will incorporate only horizontal and radial directions.  Therefore, since in the previous section, the diameter of the set is used as a proxy for the length of a curve for a bound on $k_{\Omega}$, we can restrict attention to $d_r$ and $d_x$, and in light of the previous lemma, we may use $d_x(S_{j,m})$ in place of $d(S_{j,m})$ and we have
\[
d_x(S_{j,m}) = \frac{1}{2^{j+\ell}}.
\]

Next we focus on distance to the boundary.

\begin{lemma} \label{cuspdisttobdylemma}
For each set $S_{j,m}$,
\[
\delta(S_{j,m}) \geq C(\alpha) \frac{1}{2^{\alpha j+\ell}}
\]
\end{lemma}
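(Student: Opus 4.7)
The plan is to split $\partial\Omega_\alpha$ into its three natural pieces---the cusp surface $\{|y| = x^\alpha,\ 0 < x \leq 1\}$, the endcap $\{x = 1,\ |y| \leq 1\}$, and the vertex at the origin---and bound the distance from $S_{j,m}$ to each separately. For any $(x_0, y_0) \in S_{j,m}$, the defining inequalities combined with $2^{\ell - 1} \leq m < 2^{\ell}$ give $x_0 \in [2^{-j-1},\, 2^{-j}]$. Hence for $j \geq 1$ the distance to the endcap is at least $1/2$, and the distance to the origin is at least $2^{-j-1}$; since $\alpha > 1$ and $\ell \geq 1$ force $\alpha j + \ell \geq j + 1$, both quantities exceed the target bound $C(\alpha)/2^{\alpha j + \ell}$.

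The substantive case is the distance to the cusp surface. By rotating in the $y$-plane this reduces to a planar estimate: bound from below the distance from $(x_0, r_0) \in \R^2$, where $r_0 = |y_0|$ satisfies $r_0 \leq (1 - 2^{-\ell}) x_0^{\alpha}$, to the curve $\{(t, t^{\alpha}) : 0 < t \leq 1\}$. I would suppose that some point $(t, t^{\alpha})$ on this curve lies within Euclidean distance $d$ of $(x_0, r_0)$, extract the coordinate inequalities $|x_0 - t| \leq d$ and $|t^{\alpha} - r_0| \leq d$, and combine them.

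Since the cusp surface only requires $t \in (0, 1]$, the derivative of $u \mapsto u^{\alpha}$ on this interval is at most $\alpha$, so the mean value theorem yields $|t^{\alpha} - x_0^{\alpha}| \leq \alpha d$. Feeding this into $r_0 \leq (1 - 2^{-\ell}) x_0^{\alpha}$ gives
\[
x_0^{\alpha} - \alpha d \;\leq\; t^{\alpha} \;\leq\; r_0 + d \;\leq\; (1 - 2^{-\ell}) x_0^{\alpha} + d,
\]
so $d \geq x_0^{\alpha}/\bigl((\alpha + 1) 2^{\ell}\bigr) \geq 1/\bigl((\alpha + 1) 2^{\alpha(j+1) + \ell}\bigr)$, producing the constant $C(\alpha) = 2^{-\alpha}/(\alpha + 1)$. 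The only place to be careful is the range of $t$: the mean value bound depends on $t \leq 1$, but this is automatic, since the cusp-surface portion of $\partial\Omega_\alpha$ is precisely parametrized over $t \in (0, 1]$ and the other boundary pieces were already handled in the first paragraph.
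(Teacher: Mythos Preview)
Your proof is correct, and the core idea matches the paper's: both arguments reduce to showing that the Euclidean distance from a point $(x_0,r_0)$ with $r_0 \le (1-2^{-\ell})x_0^{\alpha}$ to the cusp curve is at least a constant $C(\alpha)$ times the vertical gap $x_0^{\alpha}-r_0 \ge x_0^{\alpha}/2^{\ell}$, and then plug in $x_0 \ge 2^{-j-1}$.

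The difference lies only in how that constant is extracted. You use the mean value theorem on $u\mapsto u^{\alpha}$ over $(0,1]$ to get $|t^{\alpha}-x_0^{\alpha}|\le \alpha|t-x_0|\le \alpha d$, which combined with $|t^{\alpha}-r_0|\le d$ yields $d\ge (x_0^{\alpha}-r_0)/(\alpha+1)$ and an explicit constant $C(\alpha)=2^{-\alpha}/(\alpha+1)$. The paper instead invokes convexity of $x\mapsto x^{\alpha}$: the distance to the curve is at least the distance to the tangent line at $(x_0,x_0^{\alpha})$, whose slope is bounded on $\{x\le 1/2\}$, so that distance is a constant multiple of the vertical gap. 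The two devices are interchangeable here. Your explicit decomposition of $\partial\Omega_{\alpha}$ into cusp surface, endcap, and vertex is slightly more careful than the paper, which tacitly treats only the cusp surface; your argument also produces the constant concretely rather than existentially.
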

\begin{proof}
We first consider the two-dimensional case.  Let $f(x) = x^{\alpha}$ define the boundary.  Let $z = (x,y) \in \Omega_{\alpha} \cap \left\{x \leq \frac{1}{2}\right\}$, and let $\delta(z)$ be its distance to the boundary.  Then since $f$ is convex and increasing, $\delta(z)$ is at least the distance to the tangent line at $(x, f(x))$, and the distance to this tangent line is bounded below by a multiple of the vertical distance $x^{\alpha} - y$.  This multiple, $C_1(\alpha)$ depends on $\alpha$ only, and is realized at $x = \frac{1}{2}$.  The general case is similar, due to rotational symmetry.

Focusing now on $S_{j,m}$, the points closest to $\partial(\Omega_{\alpha})$ are $\left(\frac{m}{2^{j+\ell}},\left(1-\frac{1}{2^\ell}\right)\left(\frac{m}{2^{j+\ell}}\right)^{\alpha},\theta\right)$.  If we restrict our attention to just the radial distance we find
\begin{align*}
\delta(S_{j,m})
    &\geq C_1(\alpha)
    \left[
    \left(\frac{m}{2^{j+\ell}}\right)^{\alpha}
    - \left(1-\frac{1}{2^\ell}\right)
        \left(\frac{m}{2^{j+\ell}}\right)^{\alpha}
        \right]\\
    &= C_1(\alpha) \frac{1}{2^\ell}
    \left(\frac{m}{2^{j+\ell}}\right)^{\alpha} \\
    & \geq C_1(\alpha) \frac{1}{2^\ell}
    \left(\frac{2^{\ell-1}}
        {2^{j+\ell}}\right)^{\alpha} \\
    &= C(\alpha) \frac{1}{2^{\alpha j+\ell}}.
\end{align*}
\end{proof}
With these estimates in hand, we now have the following lemma.

\begin{lemma}
Using the basepoint $z_0 = \left(\frac{1}{3},0,0\right)$, for any point $z \in S_{j,m}$,
\[
k(z, z_0; \Omega_\alpha) \leq C(\alpha)(1+ \ell)2^{(\alpha-1)j}.
\]
\end{lemma}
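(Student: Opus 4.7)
The plan is to apply Lemma \ref{kinitialestlemma}—with $d(S)$ replaced by the horizontal width $d_x(S)$, as justified by the discussion preceding this lemma—along an explicit chain of subdivision sets running from $S_{j,m}$ to a set containing $z_0$. The starting observation is that $z_0=(1/3,0,0)$ already lies in $S_{1,1}$: since $r=0$ it sits in the innermost layer $\ell=1$, and since $x=1/3\in[1/4,1/2]=[1/2^{1+1},1/2^{1}]$ it sits in disk $j=1$. So it suffices to produce a chain from $S_{j,m}$ to $S_{1,1}$.

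The chain I would use has two phases. In the radial phase, successively pass through
\[
S_{j,m},\; S_{j,\lfloor m/2\rfloor},\; S_{j,\lfloor m/4\rfloor},\;\ldots,\; S_{j,1},
\]
a total of $\ell$ sets, all at disk $j$. Adjacency holds because layers $\ell'$ and $\ell'-1$ meet along $r=(1-1/2^{\ell'-1})x^{\alpha}$, and a short parity check confirms that the $x$-interval of $S_{j,m'}$ is contained in that of $S_{j,\lfloor m'/2\rfloor}$. In the horizontal phase, continue with $S_{j-1,1},S_{j-2,1},\ldots,S_{1,1}$; consecutive sets here share the boundary $x=1/2^{j'}$.

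Inside each $S_{j',m'}$, connect entry and exit points by a horizontal-then-radial piecewise path at a fixed angular coordinate $\theta$, whose length is at most $d_x(S_{j',m'})+d_r(S_{j',m'})\leq 3\,d_x(S_{j',m'})$ by Lemma \ref{cuspdiameterlemma}. Feeding this into the argument behind Lemma \ref{kinitialestlemma} and invoking Lemma \ref{cuspdisttobdylemma} gives, for each set in the chain,
\[
\frac{d_x(S_{j',m'})}{\delta(S_{j',m'})}\;\leq\; C(\alpha)\,\frac{1/2^{j'+\ell'}}{1/2^{\alpha j'+\ell'}}\;=\;C(\alpha)\,2^{(\alpha-1)j'}.
\]
The radial phase then contributes $\ell$ equal terms of size $C(\alpha)\,2^{(\alpha-1)j}$, while the horizontal phase contributes the geometric sum $\sum_{j'=1}^{j-1} C(\alpha)\,2^{(\alpha-1)j'}$, which, since $\alpha>1$, is dominated by its last term $C(\alpha)\,2^{(\alpha-1)j}$. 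Adding yields the desired bound $C(\alpha)(1+\ell)\,2^{(\alpha-1)j}$.

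The main obstacle is largely bookkeeping: verifying that consecutive sets in the chain actually share boundary (the parity check in the radial phase, and avoiding $S_0$ altogether by routing through $S_{1,1}\ni z_0$) and, more subtly, that the horizontal-then-radial path really lies inside each $S_{j',m'}$. The latter is the reason to keep $\theta$ fixed throughout each set, reducing the internal geometry to a two-dimensional picture where Lemma \ref{cuspdiameterlemma} directly controls the path length—even when $S_{j',m'}$ is the ring-shaped region that appears for $n>2$, $m'>1$ and fails to be star-shaped.
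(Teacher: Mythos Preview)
Your proposal is correct and is essentially the same argument as the paper's: the radial chain $S_{j,m},S_{j,\lfloor m/2\rfloor},\ldots,S_{j,1}$ is exactly the paper's binary-truncation indices $\Lambda(j,m)$, the horizontal chain $S_{j,1},S_{j-1,1},\ldots,S_{1,1}$ matches the paper's $S_{i,1}$ for $i=1,\ldots,j$, and the term-by-term bound $d_x/\delta\le C(\alpha)2^{(\alpha-1)j'}$ followed by summation is identical. The only cosmetic differences are that you traverse the chain in the opposite order and describe the path piecewise within each set, whereas the paper writes down a single global L-shaped path.
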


\begin{proof}
For any point $z = (x, r, \theta) \in S_{j,m}$, $j \geq 1$, define the L-shaped path $\gamma:[0,2] \longrightarrow \Omega_{\alpha}$ by
\begin{equation*}
\gamma(t) = 
\begin{cases}
\left(x\,t + \frac{1}{3}(1-t), 0, 0\right)&\ \mathrm{if}\ t \in [0,1] \\
(x, r(t-1), \theta)&\ \mathrm{if}\ t \in [1,2].
\end{cases}
\end{equation*}
For the first part we only need the horizontal width of each set and for the second part, we only need the radial thickness.

Now we can estimate $k_{\Omega_{\alpha}}$.  For the first step, we determine which sets intersect $\gamma$. Let $(x, r, \theta) \in S_{j,m}$.  For the initial leg from $z_0$ to $(x, 0, 0)$, we use the sets $S_{i, 1}$ for $i = 1, \ldots, j$.  For the second leg from $(x, 0, 0)$ to $(x, r, \theta)$ we need to determine which sets lie between $S_{j,m}$ and $S_{j, 1}$  There is one at each layer out to the layer containing $S_{j,m}$, and the specific sets $S_{j, \lambda}$ are determined as follows.  Express $m$ in binary.  Then the $\lambda$ values are represented in binary by truncating the binary representation of $m$ by successively removing the rightmost digit.  For example if $m = 51$ then the $\lambda$ to use are:
\begin{align*}
    51 &= 110011_{2} \\
    25 &= 11001_2 \\
    12 &= 1100_2 \\
    6 &= 110_2 \\
    3 &= 11_2 \\
    1 &= 1_2
\end{align*}
Let $\Lambda(j,m)$ be the set of indices corresponding to these sets lying below $S_{j,m}$ and note that $|\Lambda(j,m)| = \ell$.  With the specific sets through which $\gamma$ passes known, Lemma \ref{kinitialestlemma}, modified to account for the fact that only the radial or horizontal distances are needed, and then Lemmas \ref{cuspdiameterlemma} and \ref{cuspdisttobdylemma} are used to approximate $k_{\Omega_{\alpha}}$ as follows:
\begin{align*}
    k(z, z_0; \Omega_{\alpha})
        &\leq 2\sum_{i=1}^j \frac{d_x(S_{i,1})}{\delta(S_{i,1})}
            + 2\sum_{\lambda \in \Lambda(k,m)} \frac{d_r(S_{j,\lambda})}{\delta(S_{j,\lambda})} \\
        &\leq 2\sum_{i=1}^j \frac{d_x(S_{i,1})}{\delta(S_{i,1})}
            + 4\sum_{\lambda \in \Lambda(k,m)} \frac{d_x(S_{j,\lambda})}{\delta(S_{j,\lambda})} \\
        &\leq C_1(\alpha) \left(\sum_{i=1}^j \frac{2^{\alpha i+1}}{2^{i+1}}
            + \sum_{\lambda \in \Lambda(j,m)} \frac{2^{\alpha j+\ell}}{2^{j+\ell}}
            \right)\\
        &= C_1(\alpha) \left(\sum_{i=1}^j 2^{(\alpha-1) i}
            + \sum_{\lambda \in \Lambda(j,m)} 2^{(\alpha-1) j} \right) \\
        &\leq C(\alpha)\left( 2^{(\alpha-1)j} + \ell 2^{(\alpha-1) j}\right) \\
        &= C(\alpha)(1 + \ell)2^{(\alpha-1) j}
\end{align*}
where the first sum on the third-to-last line is approximated by a constant times the largest term.
\end{proof}

With $k(z, z_0; \Omega_{\alpha})$ approximated, the next step is to estimate the measure of $S_{j,m}$.  
\begin{lemma}
For each $S_{j,m}$,
\[
|S_{j,m}| \leq C(\alpha, n) \frac{1}{2^{j[\alpha(n-1)+1]}} \frac{1}{2^{2\ell}}.
\]
\end{lemma}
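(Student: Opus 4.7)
The plan is to compute $|S_{j,m}|$ directly in cylindrical coordinates, where $dz = r^{n-2}\,dx\,dr\,d\theta$, and to carefully account for the two independent factors of $2^{-\ell}$ that appear in the target estimate.

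First, I would integrate out $\theta$ over $\mathbb{S}^{n-2}$, contributing a dimensional constant $|\mathbb{S}^{n-2}|$ which is absorbed into $C(\alpha, n)$. For the radial integration, setting $a = 1 - 1/2^{\ell-1}$ and $b = 1 - 1/2^{\ell}$, the inner integral becomes
\[
\int_{a x^{\alpha}}^{b x^{\alpha}} r^{n-2}\, dr = \frac{x^{\alpha(n-1)}}{n-1}\bigl(b^{n-1} - a^{n-1}\bigr).
\]
Since $0 \leq a < b \leq 1$, applying the mean value theorem to $t \mapsto t^{n-1}$ gives $b^{n-1} - a^{n-1} \leq (n-1)(b - a) = (n-1)/2^{\ell}$, which extracts the first factor of $2^{-\ell}$ and leaves the integrand bounded by a constant multiple of $x^{\alpha(n-1)} \cdot 2^{-\ell}$.

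Next I would carry out the horizontal integration over $x \in [m/2^{j+\ell},\, (m+1)/2^{j+\ell}]$. Since $x^{\alpha(n-1)}$ is increasing, I bound it by its maximum at the right endpoint. The key arithmetic input is that $\ell = \lfloor \log_2 m \rfloor + 1$ forces $m + 1 \leq 2^{\ell}$, so $(m+1)/2^{j+\ell} \leq 2^{-j}$, giving $x^{\alpha(n-1)} \leq 2^{-\alpha(n-1)j}$ throughout the interval. Multiplying by the interval length $2^{-(j+\ell)}$ contributes the second factor of $2^{-\ell}$ together with $2^{-\alpha(n-1)j - j}$.

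Combining the three pieces yields
\[
|S_{j,m}| \leq C(\alpha, n) \cdot \frac{1}{2^{\ell}} \cdot \frac{1}{2^{\alpha(n-1)j}} \cdot \frac{1}{2^{j+\ell}}
= C(\alpha, n) \frac{1}{2^{j[\alpha(n-1)+1]}} \frac{1}{2^{2\ell}},
\]
as claimed. No step presents a real obstacle; this is a clean direct computation. The only subtlety worth flagging is that both $2^{-\ell}$ factors must be tracked separately: one arises from the radial thickness via $b^{n-1} - a^{n-1} \leq (n-1)/2^{\ell}$, and the other from the horizontal interval width $2^{-(j+\ell)}$. The inequality $m + 1 \leq 2^{\ell}$ is the single algebraic input that converts the local position $x$ within the $j$-th disk into the global scale $2^{-j}$.
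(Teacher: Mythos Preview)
Your proof is correct and follows essentially the same direct computation as the paper: integrate out $\theta$, bound the radial integral by extracting a factor of $2^{-\ell}$ from the radial thickness, then bound the $x$-integral by the right endpoint times the interval width $2^{-(j+\ell)}$. The only cosmetic differences are that the paper bounds the $r$-integral via $\int f \leq (\max f)\cdot(\text{length})$ rather than the mean value theorem, and it routes the endpoint estimate through $m+1 \leq 2m$ and $m < 2^{\ell}$ separately rather than your single step $m+1 \leq 2^{\ell}$.
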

\begin{proof}
We have
\[
    |S_{j,m}| = \int_{\mathbb{S}^{n-2}} \int_{\frac{m}{2^{j+\ell}}}^{\frac{m+1}{2^{j+\ell}}}
    \int_{\left(1-\frac{1}{2^{\ell-1}}\right)x^{\alpha}}^{\left(1-\frac{1}{2^{\ell}}\right)x^{\alpha}}
    r^{n-2}\; dr dx d\theta
\]
The integral over the sphere just produces a dimensional constant.  For the other two integrals, the given functions are increasing, and so are approximated by $\int_a^b f(x)\, dx \leq f(b)(b-a)$ resulting in
\begin{align*}
|S_{j,m}| &\leq C_1(n) \int_{\frac{m}{2^{j+\ell}}}^{\frac{m+1}{2^{j+\ell}}}\left[\left(1-\frac{1}{2^{\ell}}\right)x^{\alpha}\right]^{n-2}x^{\alpha}\frac{1}{2^{\ell}}\, dx \\
    &= C_1(n) \frac{1}{2^{\ell}}\left(1-\frac{1}{2^{\ell}}\right)^{n-2} \int_{\frac{m}{2^{j+\ell}}}^{\frac{m+1}{2^{j+\ell}}}x^{\alpha(n-1)}\, dx \\
    &\leq C_1(n) \frac{1}{2^{\ell}}\left(1-\frac{1}{2^{\ell}}\right)^{n-2} \left(\frac{m+1}{2^{j+\ell}}\right)^{\alpha(n-1)} \frac{1}{2^{j+\ell}}. \\
\end{align*}
Since $m+1 \leq 2m$ and $1-\frac{1}{2^{\ell}} < 1$, this simplifies to
\[
|S_{j,m}| \leq C_1(n) \frac{1}{2^{j[\alpha(n-1)+1]}} \frac{(2m)^{\alpha(n-1)}}{2^{2\ell}2^{\ell \alpha(n-1)}}.
\]
The final estimate then follows from the fact that $m < 2^{\ell}$.
\end{proof}

We are now in position to complete the proof of Theorem \ref{cuspthm}.

\begin{proof}[Proof (part two)]

The domain $\Omega_{\alpha}$ can be subdivided into a John domain and the family of $S_{j,m}$ as follows
\begin{align*}
\int_{\Omega_{\alpha}} [k(z, z_0; \Omega_{\alpha})]^s \, dz
    &= \int_{\Omega_{\alpha} \cap \{x > \frac{1}{2}\}} [k(z, z_0, \Omega_{\alpha})]^s \, dz + \sum_{j = 1}^{\infty} \sum_{m = 1}^{\infty} \int_{S_{j,m}} [k(z, z_0; \Omega_{\alpha})]^s \, dz \\
    &< \int_{S_0} [k(z, z_0; \Omega_{\alpha})]^s \, dz + \sum_{j = 1}^{\infty} \sum_{m = 1}^{\infty} \int_{S_{j,m}} [k(z, z_0; \Omega_{\alpha})]^s \, dz \\
    &< \int_{S_0} [k(z, z_0; S_0)]^s \, dz + \sum_{j = 1}^{\infty} \sum_{m = 1}^{\infty} \int_{S_{j,m}} [k(z, z_0; \Omega_{\alpha})]^s \, dz
\end{align*}
The first integral on the right is finite since the domain is a John domain.

For the sum of integrals, the estimate for $k(z, z_0; \Omega_{\alpha})$ and $|\Omega_{\alpha}|$ are combined to estimate $\int_{\Omega_{\alpha}} k(z, z_0; \Omega_{\alpha})^s \, dz$ as follows:
\begin{align*}
    \sum_{j = 1}^{\infty} \sum_{m = 1}^{\infty} \int_{S_{j,m}} [k(z, z_0; \Omega_{\alpha})]^s \, dz
        &\leq C(\alpha,n) \sum_{j = 1}^{\infty} \sum_{m = 1}^{\infty} [(1 + \ell)2^{(\alpha-1) j}]^s
        \frac{1}{2^{j(\alpha(n-1)+1)}} \frac{1}{2^{2\ell}} \\
        &= C(\alpha,n) \sum_{j = 1}^{\infty} [2^{(\alpha-1) s - (\alpha(n-1)+1)}]^j
            \sum_{m = 1}^{\infty} \frac{(1+\ell)^s}{2^{2\ell}}
\end{align*}
For the sum over $m$, note that $2^{2 \ell} > m^2$ and $(1+ \ell) \leq 2 + \log_2(m)$, so 
\[
\sum_{m = 1}^{\infty} \frac{(1+\ell)^s}{2^{2\ell}}       \leq \sum_{m = 1}^{\infty}
    \frac{[2+\log_2(m)]^s}{m^2}
\]
which converges for all $s$.

For the sum over $j$, this converges if and only if
\[
(\alpha-1) s - (\alpha(n-1)+1) < 0
\]
which can be rearranged as $(\alpha-1) (s - n + 1) < n$.
\end{proof}

We end with some comments. 1)  First, if $n = 2$, the domain $\Omega_{\alpha}$ is a finite intersection of John domains. 2) For all $n$, $\alpha$, $\Omega_{\alpha}$ is star-shaped and therefore $p$-Poincar\'{e} for all $1 \leq p < \infty$.

\section{Block Domains} \label{blockdomainsec}

In this section we build a domain using blocks, and show, by combining the techniques above, for which $s$ it is $L^s$-averaging. 

Consider the domain $\Omega \in \mathbb{R}^n$ defined as follows:  Starting with a closed unit cube $\Omega_1$, perform a triadic subdivision of the top face and glue a closed cube $\Omega_2$ onto the middle.  Then, on the top face of $\Omega_2$ glue on a cube $\Omega_3$ that is the same size as $\Omega_2$.  Next, in a similar fashion, perform a triadic subdivision the top face of $\Omega_3$, glue a cube $\Omega_4$ onto the middle, and then extend with three more cubes, all the same size.  Continue this process, doubling the number of same-sized cubes in each step so that the cubes $\Omega_{2^j}, \ldots \Omega_{2^{j+1}-1}$ have edge length equal to $3^{-j}$.  Finally, take the interior of the infinite union.  See Figure \ref{blockdomainfig}.

\begin{figure}
\begin{picture}(180,170)
\put(0,-8){
\includegraphics[scale = .55, clip = true, draft = false]{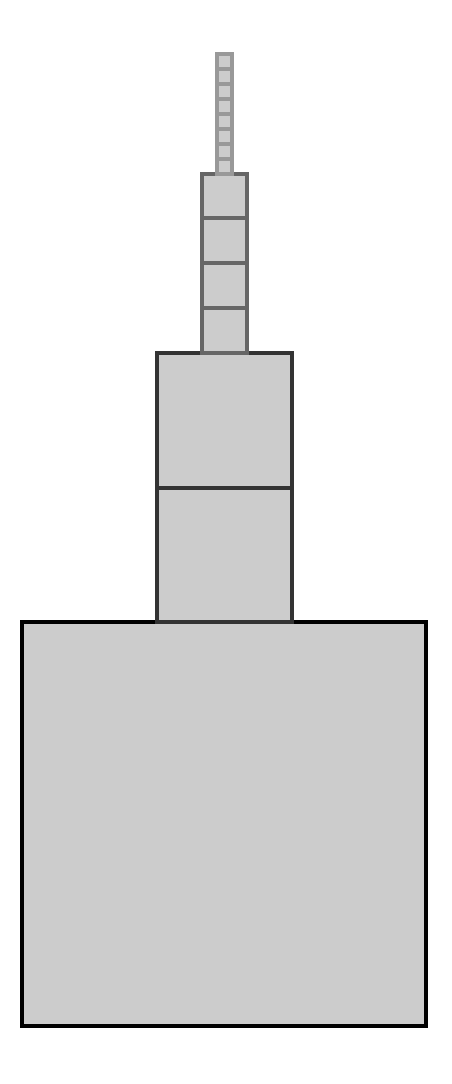}
}
\put(100,0){
\includegraphics[scale = .5, clip = true, draft = false]{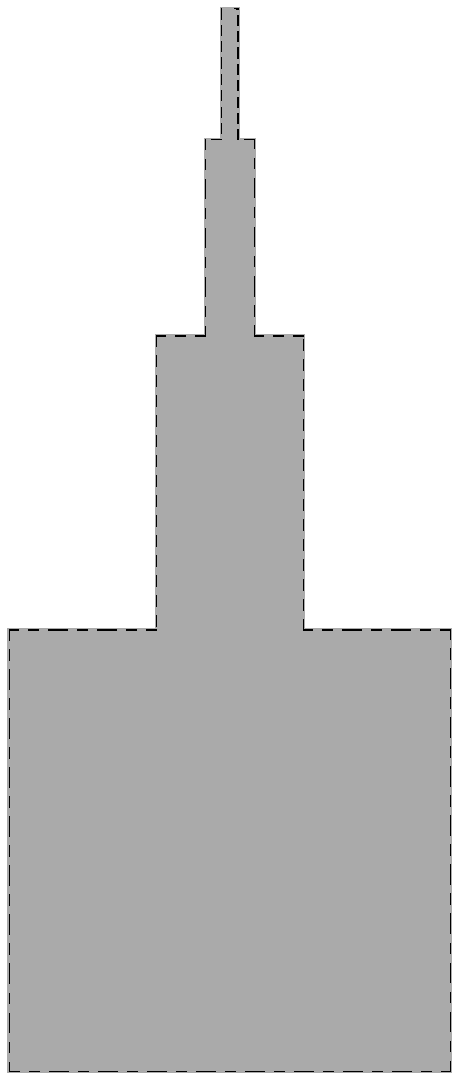}
}
\put(78,80){$\longrightarrow$}
\put(37.5,157){$\cdot$}
\put(37.5,160){$\cdot$}
\put(37.5,163){$\cdot$}
\put(135,157){$\cdot$}
\put(135,160){$\cdot$}
\put(135,163){$\cdot$}
\end{picture}
\caption{Block Domain Construction.} \label{blockdomainfig}
\end{figure}

\begin{thm}
The set $\Omega$ is $L^s$-averaging if and only if $s < n\log_2(3) - 1$.
\end{thm}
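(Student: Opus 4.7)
The argument splits into a necessity direction via essential tubes and a sufficiency direction via Staples's characterization, and the threshold $s = n \log_2 3 - 1$ arises from the same geometric count in both: there are $2^j$ cubes of edge length $3^{-j}$ at level $j$.

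For necessity, the plan is to use one essential tube per level. At level $j \geq 1$ the cubes $\Omega_{2^j}, \ldots, \Omega_{2^{j+1}-1}$ stack into a vertical column of height $2^j \cdot 3^{-j}$ with square cross-section of side $3^{-j}$. Take $T_j$ to be the cylinder inscribed along the axis of this column, so $l_j = 2^j \cdot 3^{-j}$, $r_j = 3^{-j}/2$, and a positive dimensional constant $c_j$ controls the slices (interior slices are full disks; one may shorten the tube at each end by a fixed multiple of $3^{-j}$ so that every slice of $\Omega_{T_j}$ satisfies the bound). The components $\Omega_{T_j}$ lie in spatially disjoint columns, so Corollary \ref{tubeestcor} applies with
\[
E_{\mathcal{T}} = \sum_{j=1}^{\infty} c_j\, r_j^{\,n} \left(\frac{l_j}{r_j}\right)^{s+1} = C(n,s) \sum_{j=1}^{\infty} \left(\frac{2^{s+1}}{3^n}\right)^j,
\]
which diverges precisely when $2^{s+1} \geq 3^n$, i.e.\ $s \geq n \log_2 3 - 1$.

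For sufficiency, by Staples's characterization it suffices to show $\int_{\Omega} k(z,z_0;\Omega)^s\, dz < \infty$ with $z_0$ the center of $\Omega_1$. For $z \in \Omega_k$ at level $j$, let $z_k$ be the center of $\Omega_k$ and split
\[
k(z, z_0; \Omega) \leq k(z, z_k; \Omega_k) + k(z_k, z_0; \Omega).
\]
The bound $k(z, z_k; \Omega) \leq k(z, z_k; \Omega_k)$ holds because the interior interface portions of $\partial \Omega_k$ are not part of $\partial \Omega$, so $d(\cdot,\partial\Omega) \geq d(\cdot,\partial\Omega_k)$ throughout $\Omega_k$; rescaling Lemma \ref{cubelemma} then yields $\int_{\Omega_k} k(z, z_k; \Omega_k)^s\, dz \leq C(n,s)\, 3^{-jn}$. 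For the global term, I construct a path from $z_0$ through the successive centers $z_1, \ldots, z_k$; each segment either connects two cubes in a same-level stack or crosses a triadic level-change interface, and in each case its length and minimum distance to $\partial \Omega$ are both comparable to the smaller of the two cube edges, contributing an $O(1)$ amount. Concatenating gives $k(z_k, z_0; \Omega) \leq C_0 k$.

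Summing over the $2^j$ cubes at level $j$ (using $k \leq 2^{j+1}$) and then over $j$,
\[
\int_{\Omega} k(z, z_0; \Omega)^s\, dz \leq C(n,s) \sum_{j=0}^{\infty} \left[\left(\frac{2}{3^n}\right)^j + \left(\frac{2^{s+1}}{3^n}\right)^j\right],
\]
a geometric series that converges exactly when $s < n \log_2 3 - 1$. The main technical obstacle will be verifying the uniform $O(1)$ per-segment bound in the global path: at a triadic interface between a level-$j$ cube and its level-$(j+1)$ continuation, the central-axis point must be shown to stay at distance on the order of $3^{-j}/6$ from $\partial \Omega$, and confirming this geometric estimate uniformly across all levels is the delicate piece that makes both halves of the argument meet at the common critical exponent.
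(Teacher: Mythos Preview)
Your proposal is correct and the necessity half is identical to the paper's: one essential tube per level with $l_j \asymp (2/3)^j$, $r_j \asymp 3^{-j}$, and $c_j$ a dimensional constant gives $E_{\mathcal{T}} = C\sum_j (2^{s+1}/3^n)^j$, diverging for $s \geq n\log_2 3 - 1$.

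For sufficiency your route differs in organization from the paper's, though the underlying geometric count is the same. The paper builds an explicit generalized Whitney subdivision of all of $\Omega$: it places the cube subdivision of Lemma~\ref{cubelemma} inside each $\Omega_m$, then inserts ``bridge'' sets of the right size at each interface so that adjacent blocks are joined at finite Whitney depth, and applies Lemma~\ref{kestlemma} to obtain the double sum $\sum_m \sum_i (i+1+3m)^s\, 3^{-n\lfloor\log_2 m\rfloor}\, 3^{-i}$. You instead split via the quasihyperbolic triangle inequality into a local term $k(z,z_k;\Omega_k)$ handled as a black box by the scaled cube lemma, and a global center-to-center term bounded by $C_0 k$ via a direct path estimate. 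Your version is shorter because it avoids constructing and verifying a single global Whitney subdivision; the paper's version, on the other hand, illustrates how the subdivision machinery of Section~\ref{Whitneysec} is meant to be used and makes the bridge sets at level changes explicit. The ``delicate'' interface estimate you flag is in fact routine: on the central axis at a triadic interface the distance to $\partial\Omega$ is exactly half the smaller edge, so each segment contributes at most a fixed constant, and both arguments land on the same geometric series $\sum_j (2^{s+1}/3^n)^j$.
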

As before, we separate the proof into two parts, beginning with the proof of when $\Omega$ fails to be $L^s$-averaging.
\begin{proof}[Proof (part one)]
Essential tubes can be built for each set of cubes $\Omega_{2^j}, \ldots \Omega_{2^{j+1}-1}$.  For the $j$th tube, $r_j = C_1(n) \left(\frac{1}{3}\right)^j$, $l_j = \left(\frac{2}{3}\right)^j$, and $c_j = C_2(n)$ so
\begin{align*}
E_{\{T_j\}} &= C(s, n)
     \sum_{j = 1}^{\infty}
        \left[\left(\frac{1}{3}\right)^j\right]^n
        \left(\frac{\frac{2}{3}}{\frac{1}{3}}\right)^{j(s+1)} \\
    &= \sum_{j = 1}^{\infty}
        \left[\frac{2^{s+1}}{3^n}\right]^j
\end{align*}
If $2^{s+1} \geq 3^n$ this sum diverges and so by Corollary \ref{tubeestcor}, $\Omega$ is not $L^s$-averaging, and this happens for $s \geq n\log_2(3) - 1$.
\end{proof}

\begin{proof}[Proof (part two)]
First, subdivide each $\Omega_m$ using the subdivision in the proof of Lemma~\ref{cubelemma}. This subdivision has the problem of needing infinitely many elements of the subdivision for any path connecting points in $\Omega_m$ to points in $\Omega_{m+1}$.  This is resolved by noting that when $\Omega_m$ and $\Omega_{m+1}$ are the same size, the sets at the centers of $\Omega_m$ and $\Omega_{m+1}$ can be connected with a third set of the same size, and when $\Omega_m$ and $\Omega_{m+1}$ are not the same size, the center set in $\Omega_{m+1}$ is the same size as the sets in the layer $L_1$ of $\Omega_m$ and can be connected to this layer by a single set of the same size.  See Figure \ref{adjustedsubdivisionfig}.

\begin{figure}
\begin{picture}(230,230)
\put(0,0){
\includegraphics[scale = .7, clip = true, draft = false]{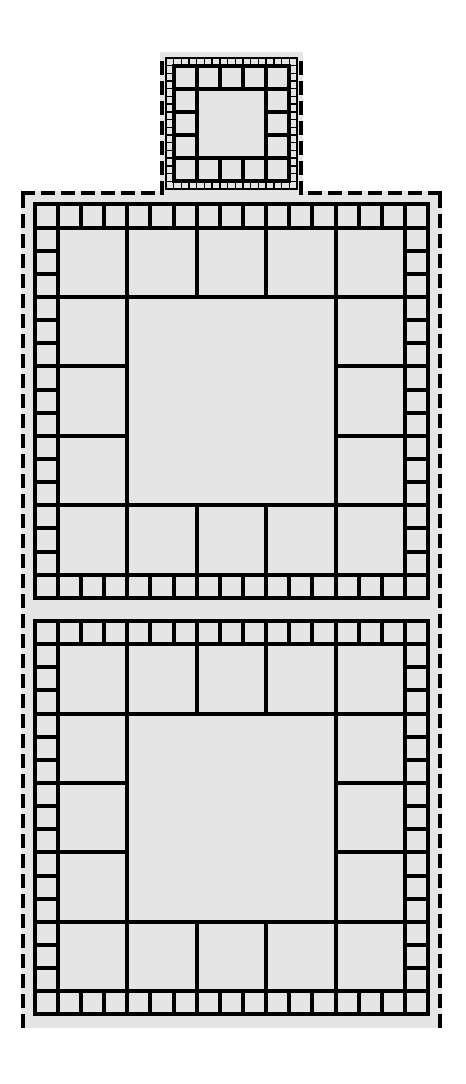}
}
\put(120,0){
\includegraphics[scale = .7, clip = true, draft = false]{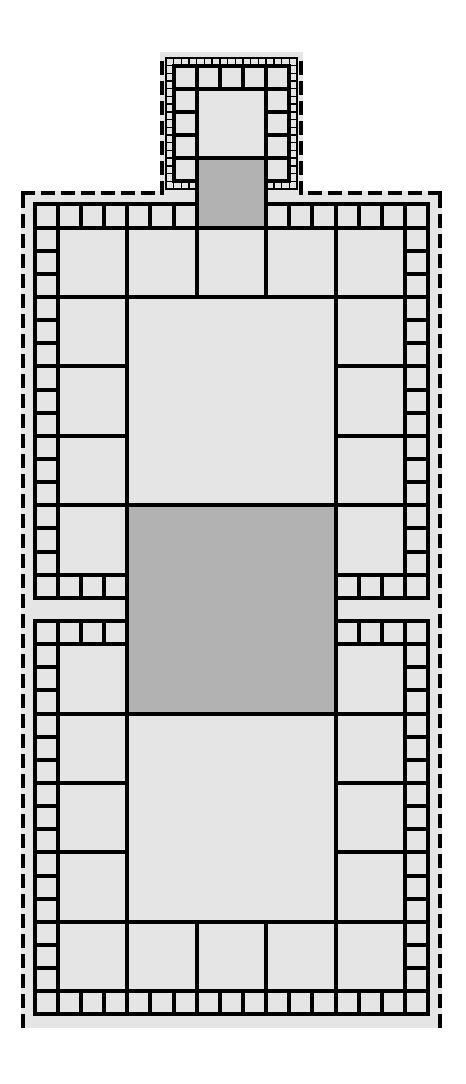}
}
\put(102,94){$\longrightarrow$}
\put(102,179){$\longrightarrow$}
\put(49,210){$\cdot$}
\put(49,213){$\cdot$}
\put(49,216){$\cdot$}
\put(49,4){$\cdot$}
\put(49,1){$\cdot$}
\put(49,-2){$\cdot$}
\put(169,210){$\cdot$}
\put(169,213){$\cdot$}
\put(169,216){$\cdot$}
\put(169,4){$\cdot$}
\put(169,1){$\cdot$}
\put(169,-2){$\cdot$}
\end{picture}
\caption{Modifying the subdivision.  The dark gray sets replace the sets they cover so as to connect the blocks.  Note that even with this modification, there are sets of measure zero at the boundary of each $\Omega_i$ that do not get covered.} \label{adjustedsubdivisionfig}
\end{figure}

Thus, to build a path from $z_0 \in \Omega_1$ to $z$ in the $i$th layer in $\Omega_m$, first walk to the center of $\Omega_m$, requiring at most $3m$ Whitney sets, and then to the $i$th layer, requiring at most $i+1$ additional steps, maybe much fewer if $z$ happens to be in or near one of the new big sets acting as a bridge into or out of $\Omega_m$.

Note that this generalized Whitney subdivision does not cover all of $\Omega$.  Namely, it misses most of the points at $\Omega_i \cap \Omega_{i+1}$.  This does not pose a difficulty though because it is a set of measure 0.

Combining this with the estimate in equation $\eqref{ksquareesteqn}$, and accounting for the sizes of the $\Omega_m$,
\[
\int_{\Omega}[k(z, z_0; {\Omega})]^s\, dz
    \leq C(n, s) \sum_{m=1}^{\infty} \sum_{i=0}^{\infty}(i+1+3m)^s \left(\frac{1}{3^n}\right)^{\lfloor \log_2(m)\rfloor} \left(\frac{1}{3}\right)^i
\]
where $\lfloor\log_2(m)\rfloor$ accounts for the size of the $\Omega_m$.  Noting that
$\lfloor\log_2(m)\rfloor \geq \log_2(m) - 1$,
it follows that
\begin{align*}
    \left(\frac{1}{3^n}\right)^{\lfloor\log_2(m)\rfloor}
    &\leq \left(\frac{1}{3^n}\right)^{\log_2(m) - 1} \\
    &=3^n \left(\frac{1}{3^n}\right)^{\log_2(m)} \\
    &= 3^n m^{-n\log_2(3)}.
\end{align*}
Plugging this into the estimate above yields
\[
\int_{\Omega}[k(z, z_0; {\Omega})]^s\, dz
    \leq C(n, s) \sum_{m=1}^{\infty} \sum_{i=0}^{\infty}(i+1+3m)^s m^{-n\log_2(3)} \left(\frac{1}{3}\right)^i.
\]
Note that $i+1+3m \geq m$ so if $s \geq n\log_2(3) - 1$ the double sum diverges and this estimate gives us no information.  On the other hand,
\begin{align*}
(i+1 + 3m)^s
    &= m^s\left(\frac{i+1}{m} + 3\right)^s \\
    &\leq m^s (i+4)^s
\end{align*}
so if $s < n\log_2(3) - 1$ then the double sum above converges and so $\Omega$ is $L^s$-averaging.
\end{proof}

\subsection{Variations}

With this initial tower in hand, there are a number of modifications that can be made without significantly changing the analysis.

First, we could glue towers of cubes on to all faces of the initial cube, and more generally, we could add other smaller towers as well.  As long as the number of additional towers is bounded, the estimates above will still hold.

We could consider more extreme ratios of side lengths of adjacent squares.  This will only affect the contribution of the number of steps to go from one center to the next.  As long as this stays bounded, the analysis above will hold.

We could glue the cubes together in different orientations to produce spirals, trees, or other interesting fractal shapes.  For the above analysis to hold, the key thing that would need to be preserved is that the number of cubes of a given size stay comparable to the number introduced above.  More exotic shapes could be considered with more careful analysis.

The ``2'' in the critical value comes from the growth in the number of cubes of a given size and the ``3'' comes from the ratio of one size to the next.  Playing with these values would produce other relationships.  In the current case, ``3'' was chosen because it was relatively easy to verify that the Whitney subdivision has the correct properties, and then ``2'' was the only available integer of any interest.  For example, using ``1'' instead of ``2'', we get something like an Aztec pyramid, which is $L^s$-averaging for all $s \geq 1$, and in fact is John.

\section {\bf The union of $L^s(\mu)$-averaging domains} \label{unionsec}

In 1999, Jussi Vaisala proved that, under appropriate conditions, the union of John domains is still a John domain in \cite{Vaisala}. Since $L^s(\mu)$-averaging domains are extensions of John domains, a natural question is:  Does the union of $L^s(\mu)$-averaging domains have the similar property? We will answer this question in this section.

We say a weight $w(z)$ satisfies the $A_r$ condition in a domain $\Omega$, and write $w\in A_r(\Omega)$,
$r>1$, if
\[
\sup_{B \subset \Omega} \left(\frac{1}{|B|} \int_B w\, dz\right)
\left(\frac{1}{|B|}\int_B w^{\frac{1}{1-r}}\, dz\right)^{r-1} < \infty.
\]
Note that if $w \in A_r(\Omega)$ and $G \subset \Omega$ then $W \in A_r(G)$ as well.  With this weight, the measure $\mu$ is defined by $d\mu = w(z) dz$

The following result, found in \cite{LiuDing}, gives a necessary and sufficient condition for a domain to be $L^s(\mu)$-averaging so long as the weight function defining $\mu$ satisfies the $A_r$ condition.

\begin{lemma} \label{lsavgweightlemma}
Let $w \in A_r$ for $r>1$ and $\mu$ be a measure defined by $d\mu = w(z)dz$.  Then $\Omega$ is an $L^s(\mu)$-averaging domain if and only if the inequality
\[
\left(\frac{1}{\mu(\Omega)}
    \int_{\Omega} k(z,z_0; \Omega)^s d\mu \right)^{\frac{1}{s}} \leq C
\]
holds for some fixed point $z_0$ in $\Omega$ and a constant $C$ depending only on $n$, $s$, $\mu(\Omega)$, the choice of $z_0 \in \Omega$, and the constant from the inequality in the definition of $L^s(\mu)$-averaging domains.
\end{lemma}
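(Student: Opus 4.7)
The plan is to adapt Staples' argument for the unweighted characterization (\cite{Staples}) to the weighted setting, using the $A_r$ condition to transfer estimates between Lebesgue measure and $\mu$. A key background fact, which I would invoke freely, is that $w \in A_r$ implies $\mu$ is doubling, so that Whitney-type balls have $\mu$-measures comparable to those of their dilates and of their neighbors. I would also use the existence of quasihyperbolic geodesics (Gehring-Osgood).

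For the necessity direction, I would test the defining inequality for an $L^s(\mu)$-averaging domain against the function $u(z) = k(z, z_0; \Omega)$. By the triangle inequality and Jensen's inequality, the left-hand side controls $\left(\frac{1}{\mu(\Omega)}\int_\Omega k^s\, d\mu\right)^{1/s}$ up to an additive constant coming from the weighted mean $k_\Omega$ (which is itself controlled in the same way). The crux is showing that the right-hand supremum $\sup_{B \subset \Omega}\frac{1}{\mu(B)}\int_B |k - k_B|^s\, d\mu$ is finite. For a Whitney-type ball $B = B(c,\rho)$ with $2B \subset \Omega$, integrating $1/d(\zeta,\partial\Omega)$ along straight-line segments inside $B$ gives a uniform oscillation bound $|k(z,z_0;\Omega) - k(c,z_0;\Omega)| \leq C$ for $z \in B$, independent of $B$; an $A_r$-based argument then reduces the supremum over arbitrary sub-balls to this Whitney case.

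For the sufficiency direction I would use a chain-of-balls argument. Given $u \in L^1_{\mathrm{loc}}(\Omega)$ and $z \in \Omega$, follow a quasihyperbolic geodesic from $z_0$ to $z$ and cover it by a chain of Whitney-type balls $B_0, B_1, \ldots, B_N$ with $N \leq C(k(z, z_0; \Omega) + 1)$. Telescoping gives
\[
|u(z) - u_{B_0}| \leq |u(z) - u_{B_N}| + \sum_{i=0}^{N-1} |u_{B_{i+1}} - u_{B_i}|,
\]
and each difference $|u_{B_{i+1}} - u_{B_i}|$ of weighted averages is controlled on a common containing dilate by the supremum on the right-hand side of the $L^s(\mu)$-averaging definition. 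Raising to the $s$-th power, applying H\"older, integrating against $d\mu$ over $\Omega$, and invoking the hypothesis that $\int_\Omega k^s\, d\mu \leq C\,\mu(\Omega)$ then recovers the $L^s(\mu)$-averaging inequality.

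The main obstacle is the careful bookkeeping of the $A_r$-weight through the chain-of-balls construction. Specifically, one needs (a) doubling, so that neighboring Whitney balls have uniformly comparable $\mu$-measures and the telescoping sum closes; (b) a weighted Poincar\'e-type oscillation estimate on individual balls, so that $|u_{B_{i+1}} - u_{B_i}|$ is dominated by the $L^s(\mu)$-averaging right-hand side rather than by its unweighted analogue; and (c) the transfer of the pointwise uniform oscillation estimate for $k$ on a Whitney ball into an $L^s(\mu)$ bound in the necessity direction. Each of these is standard $A_r$-theory, but weaving them together so that the final constant depends only on $n$, $s$, $\mu(\Omega)$, the choice of $z_0$, and the $L^s(\mu)$-averaging constant is the delicate part of the argument.
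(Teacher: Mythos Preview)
The paper does not prove this lemma at all: it is quoted verbatim as a result from \cite{LiuDing} and used as a black box in Section~\ref{unionsec}. So there is no in-paper argument to compare your proposal against.

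That said, your outline is the standard route and is essentially the one carried out in \cite{LiuDing}, which in turn adapts Staples' unweighted proof. The necessity direction via testing $u=k(\cdot,z_0;\Omega)$ and the pointwise oscillation bound $|k(z,z_0;\Omega)-k(z',z_0;\Omega)|\leq C$ on Whitney balls is exactly right; the sufficiency direction via a quasihyperbolic chain of Whitney balls with length comparable to $k(z,z_0;\Omega)+1$ is likewise the expected argument. Your identification of where the $A_r$ hypothesis enters---doubling for comparability of $\mu$-measures of neighboring Whitney balls, and the passage between weighted and unweighted local oscillation---is accurate. If you flesh out the bookkeeping you flagged in (a)--(c), you will have reproduced the cited proof; nothing in your plan is off-track.
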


\begin{thm} \label{unionthm}
Let $G_1$ and $G_2$ be bounded $L^s(\mu)$-averaging domains with $G_1\cap G_2 \neq \emptyset$, where the measure $\mu$ is defined by $d\mu = w(z)dz$, and $w\in A_r(G_1 \cup G_2)$. Then, $G_1 \cup G_2$ is also an $L^s(\mu)$-averaging domain.
\end{thm}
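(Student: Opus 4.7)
The plan is to invoke Lemma \ref{lsavgweightlemma} in both directions, combined with the monotonicity of the quasihyperbolic distance under enlargement of the domain. Since $G_1 \cap G_2 \neq \emptyset$, I would fix a point $z_0 \in G_1 \cap G_2$ once and for all, and then verify the quasihyperbolic integral criterion of Lemma \ref{lsavgweightlemma} for $\Omega := G_1 \cup G_2$ at this common base point.

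First, I would apply the forward direction of Lemma \ref{lsavgweightlemma} to $G_1$ and $G_2$ individually, using the observation made just before the lemma that $w \in A_r(G_1 \cup G_2)$ restricts to $w \in A_r(G_i)$. This yields constants $C_1, C_2$ with
\[
\int_{G_i} k(z, z_0; G_i)^s \, d\mu \leq C_i \, \mu(G_i), \qquad i = 1,2.
\]
Strictly speaking, the lemma only supplies the inequality for \emph{some} fixed base point in $G_i$, not necessarily our chosen $z_0$. To move the base point I would use the quasihyperbolic triangle inequality
\[
k(z, z_0; G_i) \leq k(z, \tilde{z}_i; G_i) + k(\tilde{z}_i, z_0; G_i),
\]
where the second term is a finite constant (because $z_0, \tilde{z}_i \in G_i$ and $G_i$ is a domain), together with the elementary bound $(a+b)^s \leq 2^{s-1}(a^s + b^s)$, which inflates $C_i$ only by a constant factor.

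Second, I would exploit monotonicity: for $z \in G_i$, any rectifiable curve in $G_i$ connecting $z$ to $z_0$ is also a curve in $\Omega$, and $d(\zeta, \partial \Omega) \geq d(\zeta, \partial G_i)$ for $\zeta \in G_i$ since $G_i \subset \Omega$. Therefore
\[
k(z, z_0; \Omega) \leq k(z, z_0; G_i) \quad \text{for every } z \in G_i.
\]
Splitting the integral over $\Omega$ by $\int_{G_1 \cup G_2} \leq \int_{G_1} + \int_{G_2}$ (any overlap is harmlessly double-counted) then gives
\[
\int_{\Omega} k(z, z_0; \Omega)^s \, d\mu \leq \sum_{i=1}^{2} \int_{G_i} k(z, z_0; G_i)^s \, d\mu \leq (C_1 + C_2)\, \mu(\Omega).
\]

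Finally, applying the converse direction of Lemma \ref{lsavgweightlemma} to $\Omega$, which is legitimate precisely because $w \in A_r(\Omega)$ by hypothesis, yields that $\Omega = G_1 \cup G_2$ is $L^s(\mu)$-averaging. The only genuinely delicate step is the base-point adjustment in the first stage; the remainder is monotonicity, subadditivity of the integral, and bookkeeping of constants.
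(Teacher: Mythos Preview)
Your proposal is correct and follows essentially the same approach as the paper: fix $z_0 \in G_1 \cap G_2$, use the monotonicity $k(z,z_0;G_1\cup G_2)\le k(z,z_0;G_i)$ for $z\in G_i$, split the integral over the two pieces, and apply Lemma~\ref{lsavgweightlemma} in both directions. In fact your splitting is slightly cleaner (it avoids the extraneous factor $2^s$ that the paper picks up by first passing through $k_1^*+k_2^*$ and then invoking $(a+b)^s\le 2^s(a^s+b^s)$), and you are more careful than the paper about the base-point issue, which the paper simply elides.
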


\begin{proof}
First, we show that for any two domains $D$ and $G$ with $D \subset G$, we have
\begin{equation} \label{ksubsetesteq}
k(z, z_0; G) \leq k(z, z_0; D)
\end{equation}
for any $z, z_0$ in $D$. We know that for any $z \in D$, it follows that
\[
d(z, \partial G) \geq d(z, \partial D),
\]
so, for any rectifiable curve $\gamma$ in $D$ joining $z$ to $z_0$, we have
\[
\int_{\gamma} \frac{1}{d(\zeta, \partial G)} d\sigma \leq \int_{\gamma} \frac{1}{d(\zeta, \partial D)} d\sigma.
\]
Hence,
\[
\inf_{\gamma \subset D} \int_{\gamma} \frac{1}{d(\zeta, \partial G)} d\sigma \leq \inf_{\gamma \subset D} \int_{\gamma} \frac{1}{d(\zeta, \partial D)} d\sigma.
\]
Thus,
\begin{align*}
k(z, z_0; G)
    &= \inf_{\gamma \subset G} \int_{\gamma}
        \frac{1}{d(\zeta, \partial G)} d\sigma \\
    &\leq \inf_{\gamma \subset D} \int_{\gamma}
        \frac{1}{d(\zeta, \partial G)} d\sigma \\ 
    &\leq \inf_{\gamma \subset D} \int_{\gamma}
        \frac{1}{d(\zeta, \partial D)} d\sigma \\
    &= k(z, z_0; D).
\end{align*}
Now, choose $z_0\in G_1 \cap G_2$.  For $i = 1, 2$, extend the definitions of $k(z, z_0; G_i)$ to $G_1 \cup G_2$ by
\begin{equation*}
	k^*_i(z, z_0)=
	\begin{cases}
		k(z, z_0; G_i), & z \in G_i \\
		0, & z \not \in G_i.
	\end{cases}
\end{equation*}
Then, by Equation~\eqref{ksubsetesteq}, we have
\begin{equation} \label{kunionesteq}
k(z, z_0; G_1 \cup G_2)
    \leq k^*_1(z, z_0) + k^*_2(z, z_0).
\end{equation}
Since $G_1$ and $G_2$ are $L^s(\mu)$-averaging domains,
by Lemma~\ref{lsavgweightlemma}, for $i = 1, 2$ and $z_0 \in G_1 \cap G_2$ we have
\begin{equation} \label{g1intesteq}
\frac{1}{\mu(G_i)}\int_{G_i} k(z,z_0; G_i)^sd\mu \leq C_i.
\end{equation}
Using Equations~\eqref{kunionesteq} and \eqref{g1intesteq}, and the elementary inequality 
\[
(|a| + |b|)^s \leq 2^s (|a|^s + |b|^s)
\]
for any $s>0$, we obtain
\begin{align*}
\frac{1}{\mu(G_1 \cup G_2)} &\int_{G_1 \cup G_2}
                \left( k(z, z_0;G_1 \cup G_2) \right)^s d\mu \\
    &\leq \frac{1}{\mu(G_1 \cup G_2)} 
        \int_{G_1 \cup G_2} \left(
                k^*_1(z, z_0) + k^*_2(z, z_0) \right)^s d\mu \\
    &\leq \frac{1}{\mu(G_1 \cup G_2)}
        \int_{G_1 \cup G_2} 2^s \left(
            \left( ( k^*_1(z, z_0)\right)^s + \left ( k^*_2(z, z_0)
                \right)^s \right) d\mu \\
    &= \frac{2^s}{\mu(G_1 \cup G_2)}
        \int_{G_1 \cup G_2} \left ( k^*_1(z, z_0) \right)^s d\mu \\
    &\qquad \qquad + \frac{2^s}{\mu(G_1 \cup G_2)}
        \int_{G_1 \cup G_2} \left( k^*_2(z, z_0) \right)^s d\mu \\
    &\leq 2^s \left (\frac{1}{\mu(G_1)}
        \int_{G_1} \left ( k(z, z_0; G_1) \right)^s d\mu
            + \frac{1}{\mu(G_2)}
                \int_{G_2} \left( k(z, z_0; G_2) \right)^s d\mu \right) \\
&\leq 2^s (C_1 + C_2) \\
&= C_3
\end{align*}
which means that
\begin{equation} \label{unionintesteq}
\left (\frac{1}{\mu(G_1 \cup G_2)} \int_{G_1 \cup G_2}
    \left( k(z, z_0; G_1 \cup G_2) \right)^s d\mu
        \right)^{\frac{1}{s}} 
            \leq C_4.
\end{equation}
and hence, by Lemma~\ref{lsavgweightlemma} and Equation~\eqref{unionintesteq},
$G_1\cup G_2$ is an $L^s(\mu)$-averaging domain.
\end{proof}

Using Theorem~\ref{unionthm} and mathematical induction, we can prove the following theorem about the finite union of $L^s(\mu)$-averaging domains.

\begin{thm} \label{manyunionthm}
Let $w\in A_r(\cup_{i=1}^m G_i)$ and let $G_i$ be $L^s(\mu)$-averaging domains, $i=1,...,m$ such that
$\cup_{i=1}^m G_i$ is connected. Then,  $\cup_{i=1}^m G_i$ is also an $L^s(\mu)$-averaging domain.
\end{thm}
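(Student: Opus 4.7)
The plan is to proceed by induction on $m$, with Theorem \ref{unionthm} supplying both the base case and the engine of the inductive step. The first thing to note is that the hypothesis "$w \in A_r(\cup_{i=1}^m G_i)$" immediately implies $w \in A_r(U)$ for any subdomain $U \subset \cup_{i=1}^m G_i$, as pointed out before the statement of Lemma \ref{lsavgweightlemma}. In particular, the $A_r$ condition is preserved under passing to subunions, so it will not obstruct any inductive step.

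The one subtlety is that Theorem \ref{unionthm} requires the two domains being combined to have non-empty intersection, whereas our hypothesis only guarantees that the \emph{full} union $\cup_{i=1}^m G_i$ is connected. To handle this, I would first perform a preliminary relabeling: I claim that after possibly permuting the indices, $\cup_{i=1}^k G_i$ is connected for every $k \in \{1, \ldots, m\}$ and, equivalently for $k \geq 2$, $G_k \cap \bigl(\cup_{i=1}^{k-1} G_i\bigr) \neq \emptyset$. The justification is a standard connectedness argument: having already selected $G_1, \ldots, G_{k-1}$ with connected union, if no remaining $G_j$ met $\cup_{i=1}^{k-1} G_i$, then
\[
\cup_{i=1}^m G_i = \Bigl(\cup_{i=1}^{k-1} G_i\Bigr) \sqcup \Bigl(\bigcup_{j \geq k} G_j\Bigr)
\]
would split as a disjoint union of two non-empty open sets, contradicting the assumed connectedness of the total union.

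Once this relabeling is in place, the induction is routine. The case $m = 1$ is trivial, and $m = 2$ is exactly Theorem \ref{unionthm} (with $G_1 \cap G_2 \neq \emptyset$ guaranteed by the reordering step, since two disjoint open sets cannot have a connected union). For the inductive step, assume the result holds for unions of $m-1$ domains. Applying the induction hypothesis to $G_1, \ldots, G_{m-1}$, whose union is connected by the reordering and over which $w$ still satisfies $A_r$, shows that $\cup_{i=1}^{m-1} G_i$ is an $L^s(\mu)$-averaging domain. Then by the reordering property $G_m$ meets $\cup_{i=1}^{m-1} G_i$, and $w \in A_r$ on their union $\cup_{i=1}^m G_i$, so Theorem \ref{unionthm} applied to the pair $\bigl(\cup_{i=1}^{m-1} G_i,\, G_m\bigr)$ concludes that $\cup_{i=1}^m G_i$ is $L^s(\mu)$-averaging, completing the induction.

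The main (and really the only) obstacle is the reordering lemma above; the rest is bookkeeping on top of Theorem \ref{unionthm}. No new quasihyperbolic estimates are needed, because Theorem \ref{unionthm} already packages the bound \eqref{kunionesteq} together with the $(|a|+|b|)^s \leq 2^s(|a|^s+|b|^s)$ inequality into the $L^s(\mu)$-averaging conclusion; iterating that bound $m-1$ times only multiplies the final constant by a factor depending on $m$, $s$, the $A_r$ constant of $w$, and the individual $L^s(\mu)$-averaging constants of the $G_i$.
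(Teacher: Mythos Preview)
Your proposal is correct and follows exactly the approach the paper indicates: the paper's entire proof is the single sentence ``Using Theorem~\ref{unionthm} and mathematical induction, we can prove the following theorem,'' and your argument supplies precisely those details, including the reordering step needed to guarantee $G_k \cap \bigl(\cup_{i<k} G_i\bigr) \neq \emptyset$ at each stage. Nothing further is required.
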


For any $t$ with $0 < t < s < \infty$ and any $z_0$ in a domain $G$, by H\"{o}lder's inequality
\[
\left( \int_G k(z, z_0; G)^t d\mu \right )^{\frac{1}{t}}
    \leq \left( \int_G k(z, z_0; G)^s d\mu \right )^{\frac{1}{s}}
            \left( \int_G d\mu \right)^{\frac{s-t}{st}}
\]
that is,
\begin{equation} \label{ltlsineq}
\left(
    \frac{1}{\mu(G)} \int_G k(z, z_0; G)^t d\mu \right)^{\frac{1}{t}}
    \leq \left(
        \frac{1}{\mu(G)}
                \int_G k(z, z_0; G)^s d\mu \right)^{\frac{1}{s}}.
\end{equation}
Applying Lemma~\ref{lsavgweightlemma} and Equation~\eqref{ltlsineq}, we have the following corollary immediately, which also appeared in \cite{LiuDing}.

\begin{cor} \label{ltlscor}
If $G$ is an $L^{s}(\mu)$-averaging domain, then $G$ is an $L^{t}(\mu)$-averaging domain for any $t$ with $0 < t < s$.
\end{cor}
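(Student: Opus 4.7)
The proof is essentially immediate from the two ingredients already assembled just before the statement, so the plan is simply to chain them together in the correct order. The \emph{only if} direction of Lemma~\ref{lsavgweightlemma} converts the hypothesis into an integrability bound on the quasihyperbolic distance, Equation~\eqref{ltlsineq} transfers that bound from exponent $s$ to exponent $t$, and the \emph{if} direction of Lemma~\ref{lsavgweightlemma} turns the resulting bound back into the $L^t(\mu)$-averaging condition.

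Concretely, I would first invoke Lemma~\ref{lsavgweightlemma} applied to $G$ at exponent $s$: since $G$ is $L^s(\mu)$-averaging, there exist a fixed $z_0 \in G$ and a constant $C$ with
\[
\left(\frac{1}{\mu(G)} \int_G k(z, z_0; G)^s\, d\mu\right)^{1/s} \leq C.
\]
Next, for any $t$ with $0 < t < s$, I would apply Equation~\eqref{ltlsineq} at this same basepoint $z_0$ to obtain
\[
\left(\frac{1}{\mu(G)} \int_G k(z, z_0; G)^t\, d\mu\right)^{1/t} \leq \left(\frac{1}{\mu(G)} \int_G k(z, z_0; G)^s\, d\mu\right)^{1/s} \leq C.
\]
Finally, I would invoke Lemma~\ref{lsavgweightlemma} in the reverse direction, now at exponent $t$, to conclude that $G$ is $L^t(\mu)$-averaging.

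There is no genuine obstacle. The Hölder inequality used to derive Equation~\eqref{ltlsineq} uses the conjugate pair $s/t$ and $s/(s-t)$ and is already recorded in the excerpt, and Lemma~\ref{lsavgweightlemma} supplies a clean two-sided equivalence between the averaging condition and the quasihyperbolic integrability bound. The only point worth a brief remark is that the hypothesis $w \in A_r$ on the ambient domain is inherited by $G$, so Lemma~\ref{lsavgweightlemma} is legitimately available at both exponents $s$ and $t$.
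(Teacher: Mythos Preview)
Your proposal is correct and matches the paper's approach exactly: the paper states that the corollary follows immediately from Lemma~\ref{lsavgweightlemma} and Equation~\eqref{ltlsineq}, which is precisely the chain you describe. There is nothing to add.
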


From Theorem~\ref{manyunionthm} and Corollary~\ref{ltlscor}, we have the following result.

\begin{thm}
Let $w\in A_r(\cup_{i=1}^m G_i)$ and let $G_i$ be $L^{s_i}(\mu)$-averaging domains with $s_i > 0$, $i=1,...,m$ such that
$\cup_{i=1}^m G_i$ is connected. Then, $\cup_{i=1}^m G_i$ is also an $L^s(\mu)$-averaging domain, 
where $s = \min \{s_1, s_2, \cdots, s_m \}$.
\end{thm}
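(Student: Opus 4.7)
The plan is to combine the two previously established results in a short reduction. Let $s=\min\{s_1,\ldots,s_m\}$, so that $s\le s_i$ for every $i$. The first step is to upgrade the hypothesis on each $G_i$: we need to know that every $G_i$ is an $L^s(\mu)$-averaging domain (with a common exponent $s$), not merely $L^{s_i}(\mu)$-averaging. When $s=s_i$ this is immediate. When $s<s_i$, we invoke Corollary~\ref{ltlscor} applied to $G=G_i$ with $t=s$; this requires the weight restricted to $G_i$ to satisfy $w\in A_r(G_i)$, which follows from the remark just before Lemma~\ref{lsavgweightlemma} that $A_r$ is inherited by subdomains, using the hypothesis $w\in A_r(\cup_{i=1}^m G_i)$.

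The second step is then a direct appeal to Theorem~\ref{manyunionthm}. Now that all the $G_i$ are $L^s(\mu)$-averaging domains for the same exponent $s$, and by hypothesis $\cup_{i=1}^m G_i$ is connected and $w\in A_r(\cup_{i=1}^m G_i)$, Theorem~\ref{manyunionthm} applies directly and yields that $\cup_{i=1}^m G_i$ is $L^s(\mu)$-averaging.

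There is no real obstacle here; the potentially delicate point is only bookkeeping, namely verifying that the $A_r$ hypothesis propagates correctly so that Corollary~\ref{ltlscor} is legitimately applicable to each $G_i$ individually. This is handled by the monotonicity remark immediately preceding Lemma~\ref{lsavgweightlemma}. One could also note that the proof strategy is sharp in the sense that no larger exponent than $\min\{s_i\}$ can be expected, because Corollary~\ref{ltlscor} goes only in the direction of decreasing exponent, so forcing a common exponent before taking the union necessarily loses to the smallest $s_i$.
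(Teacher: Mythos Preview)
Your proposal is correct and is essentially identical to the paper's own approach: the paper simply states that the result follows from Theorem~\ref{manyunionthm} and Corollary~\ref{ltlscor}, which is precisely the two-step reduction you have written out in detail. Your additional remark about inheriting the $A_r$ condition on each $G_i$ makes the bookkeeping explicit but does not deviate from the intended argument.
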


\bibliographystyle{amsalpha}
\bibliography{domains}

\end{document}